\theoremstyle{plain}
\newtheorem{theorem}{Theorem}[section]
\newtheorem{proposition}[theorem]{Proposition}
\newtheorem{lemma}[theorem]{Lemma}
\theoremstyle{definition}
\newtheorem{definition}[theorem]{Definition}
\newtheorem{assumption}[theorem]{Assumption}
\theoremstyle{remark}
\newtheorem{remark}[theorem]{Remark}
\newtheorem{example}[theorem]{Example}
\newcommand{\R}{\mathbb{R}} 
\newcommand{\N}{\mathbb{N}}
\icmltitlerunning{Min-Max Problem of Non-smooth Submodular-Concave Functions}
\begin{document}

\twocolumn[
  \icmltitle{Solving the Offline and Online Min-Max Problem of Non-smooth Submodular-Concave Functions: A Zeroth-Order Approach}



  \icmlsetsymbol{equal}{*}

  \begin{icmlauthorlist}
    \icmlauthor{Amir Ali Farzin}{anu}
    \icmlauthor{Yuen-Man Pun}{anu}
    \icmlauthor{Philipp Braun}{anu}
    \icmlauthor{Tyler Summers}{td}
    \icmlauthor{Iman Shames}{melb}
  \end{icmlauthorlist}

  \icmlaffiliation{anu}{School of Engineering, Australian National University, Canberra, Australia}
  \icmlaffiliation{td}{Department of Mechanical Engineering, University of Texas at Dallas, Richardson, Texas, USA}
  \icmlaffiliation{melb}{Department of Electrical and Electronic
  Engineering, The University of Melbourne, Parkville, Australia}

  \icmlcorrespondingauthor{Amir Ali Farzin}{amirali.farzin@anu.edu.au}

  \icmlkeywords{Machine Learning, ICML, Submodular, Min-max, zeroth-order optimization}

  \vskip 0.3in
]



\printAffiliationsAndNotice{}  

\begin{abstract}
We consider max-min and min-max problems with objective functions that are possibly non-smooth, submodular with respect to the minimiser and concave with respect to the maximiser. We investigate the performance of a zeroth-order method applied to this problem. The method is based on the subgradient of the Lovász extension of the objective function with respect to the minimiser and based on Gaussian smoothing to estimate the smoothed function gradient with respect to the maximiser. In expectation sense, we prove the convergence of the algorithm to an $\epsilon$-saddle point in the offline case. Moreover, we show that, in the expectation sense, in the online setting, the algorithm achieves $O(\sqrt{N(1+\bar{P}_N)})$ online duality gap, where $N$ is the number of iterations and $\bar{P}_N$ is the path length of the sequence of optimal decisions. The complexity analysis and hyperparameter selection are presented for all the cases. The theoretical results are illustrated via numerical examples.
\end{abstract}

\section{Introduction}

In this paper, we study mixed-integer 
max-min and min-max 
problems of the form
	\begin{equation}\label{mainp}
		\underset{y \in \mathcal{Y}}{\max}\underset{S \subset Q}{\min}\;f(S,y)
    \qquad \text{and}
    \qquad
		\underset{S \subset Q}{\min}\underset{y \in \mathcal{Y}}{\max}\;f(S,y).
	\end{equation}
    Here, the objective function $f\colon 2^{Q}\times\mathbb{R}^m\rightarrow\mathbb{R}$ is a set function over a ground set $Q$ of $n\in \N$ elements, where $2^{Q}$ denotes the power set of $Q,$ 
    $f(S,\cdot)$ is a continuous possibly non-differentiable function with respect to the second variable $y\in\mathcal{Y}$ and 
    $\mathcal{Y}\subset \R^m$ is a compact convex constraint set with diameter $D_y.$ Without loss of generality, we assume $Q = [n] = \{1,2,\dots,n\}$. 
    Moreover, we 
    assume a saddle point for $f$ exists (which exists when solutions to \eqref{mainp} 
    coincide), 
    and consider computing an $\epsilon$-saddle point, 
    i.e., $\bar{S}\in 2^{[n]}, \bar{y}\in\mathcal{Y}$ with $\max_{y\in\mathcal{Y}}f(\bar{S},y) - \min_{S\in 2^{[n]}}f(S,\bar{y})\leq\epsilon$, 
    and whose definition and existence are made precise in the main part of the paper.

     In this work, we specifically examine max-min and min-max optimisation problems whose objective functions exhibit a submodular-concave structure. An example of the applications of such a problem is the offline or online adversarial image segmentation or adversarial semi-supervised clustering. To the best of our knowledge, this is the first study on solving min-max optimisation problems with a non-smooth submodular-concave cost function.
Since the foundational contribution of \cite{edmonds1970submodular}, submodular minimisation has become a fundamental tool across numerous domains. 
Its influence spans machine learning \cite{krause2010submodular}, economics \cite{topkis1998supermodularity}, computer vision \cite{jegelka2013reflection}, and speech recognition \cite{lin2011optimal}. 
In many applications, it is crucial to obtain solutions that remain reliable in the presence of noise, outliers, or adversarial perturbations. 
Optimisation problems that seek robustness against the worst-case scenario are commonly formulated as min-max problems. 
Consequently, the past several years have seen significant progress on continuous min-max optimisation 
\cite{daskalakis2017training,farzin2025minmax,mokhtari2020convergence,farzin2025properties}. 
This research direction has produced a diverse collection of algorithms and a steadily expanding range of applications, including adversarial example generation~\cite{wang2021adversarial}, robust statistics~\cite{agarwal2022minimax}, and multi-agent systems~\cite{li2019robust}.

To guarantee the existence and computability of saddle points, certain structural assumptions are typically required. The rich structure of submodularity enables us to exploit combinatorial properties when analysing and solving these min-max problems. 
Previous studies that analysed leveraging submodularity in offline min--max problems include 
\cite{adibi2022minimax}. 
Here, the authors considered offline min-max problems with convex-submodular objective functions. They showed that obtaining saddle points is hard up to
any approximation for that class of problems, and introduced new notions of near-optimality. Based on these notions, the authors 
provide algorithmic procedures for solving convex-submodular min-max problems and characterise their convergence
rates and computational complexity. Since minimisation and maximisation of submodular functions are fundamentally different problems \cite{bach2013learning}, a direct comparison of the results of this work and \citep{adibi2022minimax} is not sensible. In \cite{mualem2024submodular}, the authors considered 
submodular offline integer min-max and offline integer max-min 
problems and they 
have established a 
systematic study
of min-max optimisation for combinatorial problems. 
They have mapped the theoretical approximability of max-min submodular optimisation, and also obtained some understanding of the approximability of min-max submodular optimisation. 
In this work, we are going to consider the case of both offline and online submodular-concave non-smooth mixed-integer max-min and min-max problems to fill this gap in the literature of combinatorial min-max problems.

A central observation we will rely on is that minimising a submodular set function (SFM) is equivalent to minimising its Lovász extension over the hypercube $[0,1]^n$ \cite{lovasz1983submodular}. 
The Lovász extension possesses several favourable analytic properties, most notably convexity and Lipschitz continuity, which make it a powerful tool for addressing optimisation problems. 
However, the Lovász extension is inherently non-smooth, meaning that standard gradient-based methods cannot be applied directly to locate its minimisers. 
As a result, algorithms for both offline SFM \cite{hamoudi2019quantum,axelrod2020near} and online SFM \cite{hazan2012online,jegelka2011online,cardoso2019differentially} typically rely on subgradients of the Lovász extension. 
In \cite{hazan2012online} it is shown that the subgradient of the Lovász extension of a submodular function can be obtained only by function evaluations and no higher order information is needed to calculate the subgradient. 
Given this non-smooth structure of Lovász extension and our setting where the objective function is possibly non-smooth with respect to the maximiser, zeroth-order optimisation methods naturally emerge as an appropriate class of techniques for solving \eqref{mainp}.
Optimisation frameworks that rely only on the evaluations of the objective function (but not on its derivatives) are commonly called zeroth-order (ZO) or derivative-free methods \cite{audet2017introduction}. Multiple ZO algorithms have been developed to solve various optimisation problems. The authors in \cite{nesterov_random_2017}, proposed and analysed a method for minimisation problems based on ZO oracles leveraging Gaussian smoothing. This method has been extensively studied for minimisation problems with different classes of functions in both offline \cite{farzin2024minimisation,farzin2025minimisation} and online settings \cite{shames2019online,farzin2025minimisationsubmodular}. Moreover, the performance of ZO algorithms leveraging random Gaussian oracles in different continuous (non)convex-(non)concave (non)smooth conditions has been analysed previously \cite{wang2023zeroth,farzin2025minmax}. Besides the ZO methods, in \citep{nedic2009subgradient}, the authors considered solving non-smooth 
convex-concave min-max problems using a first-order primal-dual subgradient method, and they 
show that the averaged iterates yield an $\epsilon$-approximate saddle point in 
$O(\epsilon^{-2})$ iterations.

In this work, we develop a ZO framework built on the extragradient
method to solve Problem~\ref{mainp} in both offline and online regimes.
Our
approach requires only function evaluations, and no gradient or higher-order information is needed.
We first analyse the offline setting and show that the proposed ZO method converges, in expectation, to an $\epsilon$-saddle point using $O(nm^2 \epsilon^{-2})$ function queries. 
We then turn to the online setting and demonstrate that the method achieves an expected online duality gap of order $O(\sqrt{N \bar{P}_N})$, where $N$ denotes the number of iterations 
and $\bar{P}_N$ is the path length of the sequence of optimal decisions. 
This dependence on path length is consistent with the well-established behaviour of dynamic regret bounds in online optimisation \cite{zinkevich2003online,hazan2016introduction}. Finally, we illustrate the theoretical findings through several numerical examples. We show that the proposed method can outperform supervised and semi-supervised trained U-Net (introduced in \cite{ronneberger2015u}) in online adversarial image segmentation.

\noindent\emph{Outline:} 
Section~\ref{sec:prem} presents the necessary preliminaries. 
Section~\ref{sec:main} introduces our main algorithmic framework and establishes its convergence and complexity guarantees. 
An illustrative example is provided in Section~\ref{sec:exm}. 
Finally, Section~\ref{sec:conc} concludes the paper and outlines several promising directions for future research. Background material, additional examples and proofs of the main results are collected in Appendices \ref{app:sub}-\ref{app:exm}.

\noindent\emph{Notation:}
For $x,y\in \mathbb{R}^d$,  $\langle x , y \rangle=x^\top y$ and 
$\|\cdot\|$ denotes 
the Euclidean norm of its argument. The gradient of a differentiable function $f: \mathbb{R}^{d} 
\to \mathbb{R}$ is denoted by  $\nabla f$. The ceiling of a real number \(N\in\R\), i.e., the smallest integer greater than or equal to \(N\), is denoted by \(\lceil N \rceil\).
 Identity matrix of proper dimension is denoted by $\mathbb{I}.$ Let $\{0,1\}^n \subset \N^n$
denote the set of $n$-dimensional binary vectors, where each coordinate is either $0$ or $1$.  There is a natural bijection between a vector $x \in \{0,1\}^n$ and a subset $S \subset [n]:=\{1,2,\ldots,n\}$. For a set $S \subset [n]$, let $\chi_S \in \{0,1\}^n$ denote its characteristic vector, 
defined by $\chi_S(i) = 1$ if $i \in S$ and $\chi_S(i) = 0$ otherwise.
 We will freely move between these two representations and, in particular, interchangeably use
\begin{align}
    f(S), \ S \subset [n] \quad \text{and} \quad f(\chi_S), \ \chi_S \in  \{0,1\}^n.
\end{align}
We denote the cardinality of the set $S$ with $|S|.$
The function $F:D\rightarrow \mathbb{R}$ with $D\subset \mathbb{R}^n$ is Lipschitz if there exists a \emph{Lipschitz constant} $L_0>0$, such that
	\begin{equation}\label{def:lip}
		|F(x)-F(y)|\leq L_0\|x-y\|,\quad \forall \ x,y\in D. 
	\end{equation}

\section{Preliminaries}\label{sec:prem}

In this section, we present the necessary
background material of min-max problems, submodular functions, the Lovász extension and Gaussian smoothing. 
In addition, we make the problem formulation and the solution approach to solve \eqref{mainp} 
more precise.
We assume that $Q=[n]$, $\mathcal{Y}\subset \R^m$ is convex and compact, and $f$ is continuous with respect to the maximiser, which will be used in derivations and definitions in this and in the following sections.  
\subsection{Min-Max Problems and Saddle Points}
For the max-min/min-max
problems in \eqref{mainp}, 
we aim to
determine a point that is 
a saddle point of the objective function, which is defined below, following \cite{adibi2022minimax}.
\begin{definition}\label{def:saddle}
A pair $(S^{\ast}, y^{\ast})\in 2^{[n]} \times \mathcal{Y}$, $n\in \N$, $\mathcal{Y} \subset \R^m$, is called a saddle point of the function $f: 2^{[n]} \times \mathcal{Y} \rightarrow \R$ if it satisfies the following condition:
\[
\forall S \subset [n], \; y \in \mathcal{Y}: \quad f(S^{\ast}, y) \leq f(S^{\ast}, y^{\ast}) \leq f(S, y^{\ast}).
\]
\end{definition}
Based on this definition, a pair $(S^{\ast}, y^{\ast})$ is a saddle point of the max-min Problem~\eqref{mainp} if neither player has an incentive to unilaterally deviate from their respective strategies: when the maximisation variable is fixed at $S^{\ast}$, the minimisation variable $y^{\ast}$ remains optimal, and vice versa. In this sense, a saddle point represents an equilibrium of the minimax problem.
There exists an extensive body of work on algorithms designed to efficiently find an $\epsilon$-saddle point in convex-concave minimax optimisation, where $\epsilon > 0$ is an arbitrary tolerance \cite{thekumparampil2019efficient}. To 
define an $\epsilon$-saddle point, we 
introduce the duality gap 
\begin{align*}
    &D(S, y) := \bar{\phi}(S) - \underset{\bar{}}{\phi}(y) \\
\text{where} \quad \bar{\phi}(S) &:= \max_{y \in \mathcal{Y}} f(S, y), \quad 
\underset{\bar{}}{\phi}(y) := \min_{S \subset [n]} f(S, y).
\end{align*}
\begin{definition}\label{def:eps_saddle}
A pair $(\bar{S}, \bar{y})\in 2^{[n]} \times \mathcal{Y}$, $n\in \N$, $\mathcal{Y} \subset \R^m$, is an $\varepsilon$-saddle point of $f: 2^{[n]} \times \mathcal{Y} \rightarrow \R$ if it satisfies
\[
D(\bar{S}, \bar{y}) = \bar{\phi}(\bar{S}) - \underset{\bar{}}{\phi}(\bar{y}) \le \varepsilon.
\]
\end{definition}
Definitions~\ref{def:saddle} and~\ref{def:eps_saddle} coincide when $\epsilon = 0$. Consequently, finite-time analyses typically focus on finding an approximate $\varepsilon$-saddle point. 
Moreover, for a function $f$ with saddle point $(S^\ast,y^\ast),$ we can define the restricted gap as 
\begin{align}\label{eq:RG}
    R(\bar{S},\bar{y}) = f(\bar{S},y^\ast)-f(S^\ast,\bar{y}).
\end{align}
By adding and subtracting $f(S^\ast,y^\ast)$ to the right-hand side of the equation and from Definition~\ref{def:saddle}, one can see that $R(\bar{S},\bar{y})\geq0$ for all $S\subset [n]$ 
and $y\in\mathcal{Y}.$ Moreover, noting  
\begin{align*}
    & f(S^{\ast}, y^\ast) \leq f(\bar{S}, y^{\ast}) \leq \max_{y\in\mathcal{Y}}f(\bar{S}, y)\\
    & \min_{S\subset [n]} f(S,\bar{y})\leq f(S^{\ast}, \bar{y}) \leq f(S^{\ast}, y^\ast),
\end{align*}
it follows that $0\leq R(\bar{S}, \bar{y})\leq D(\bar{S}, \bar{y})$.
In this work, we aim to find an $\epsilon$-saddle point of \eqref{mainp} by leveraging the convexity of the Lovász extension of a submodular function.

\subsection{Submodular Functions and the Lovász Extension}
 In this work, we consider Problems~\eqref{mainp} 
 with submodular-concave cost functions. In what follows, we provide the background pertinent to minimising submodular functions and their Lovász extensions.  
\begin{definition}[Submodular Functions \cite{hazan2012online}, Sec~2.1]\label{def:sub}
    A function $\bar{f} : 2^{[n]} \to \mathbb{R}$ is called submodular if it exhibits diminishing marginal returns. 
Formally, for all $S \subset T \subset [n]$ and any element $i \notin T$,  
\[
    \bar{f}(S \cup \{i\}) - \bar{f}(S) \;\;\ge\;\; \bar{f}(T \cup \{i\}) - \bar{f}(T).
\]  
An equivalent characterisation is that for all $S, T \subset [n]$,  
\[
    \bar{f}(S) + \bar{f}(T) \;\;\ge\;\; \bar{f}(S \cap T) + \bar{f}(S \cup T).
\]  
\end{definition}

The well-known Lovász extension provides a continuous, convex extension of a submodular function 
$\bar{f} : 2^{[n]} \to \mathbb{R}$ to a function $\bar{f}^L:
[0,1]^n \to \mathbb{R}$, where $ [0,1]^n\subset \R^n$ is the unit hypercube.
We now recall its definition.
\begin{definition}[Lovász Extension \cite{hazan2012online}, Def~5]\label{def:lovae}
    Let $x\in[0,1]^n$ and
    let $A_i\subset [n]$ ($i\in [n]$, $n\in \N$), be a chain of subsets
    $A_0\subset A_1 \subset \cdots \subset A_p$, $p\leq n$, 
    such that $x$ can be expressed as a convex combination $x=\sum_{i=0}^p \lambda_i\chi_{A_i}$ where $\lambda_i>0$ and $\sum_{i=0}^p\lambda_i=1.$ Then the value of the Lovász extension $\bar{f}^L$ at $x$ is defined to be
    \begin{align}\label{eq:eqle1}
        \bar{f}^L(x) = \sum_{i=0}^p \lambda_i\bar{f}(A_i).
    \end{align}
\end{definition}
\begin{remark}[\cite{hazan2012online}, Def.~5] \label{rem:lovae}
Sampling  
a threshold $\tau\in[0,1]$ uniformly at random, and considering the level set 
$S_\tau=\{i\in [n]:x(i)>\tau\},$  we have 
    \begin{align}\label{eq:eqle2}
        \bar{f}^L(x) = E_\tau[\bar{f}(S_\tau)].
    \end{align}
\end{remark}
Remark~\ref{rem:lovae}, 
implies that for all sets $S\subset[n],$ it follows that $\bar{f}^L(\chi_S)=\bar{f}(S).$
Next, we state the main assumption on the structure of $f$ and prove a lemma that plays a crucial role in proving the main results of this paper.
\begin{assumption}[Submodular-Concave Structure] \label{assum:main}
Function $f: 2^{[n]}\times\mathbb{R}^m\to\mathbb{R}$ is submodular with respect to its first variable and concave with respect to its second variable. 
\end{assumption}
\begin{lemma}\label{lm:xylova}
    Under Assumption~\ref{assum:main}, let 
    $f^L:[0,1]^n \times \mathbb{R}^m \rightarrow \mathbb{R}$ be the Lovász extension of $f$ with respect to its first variable. Then, $f^L$ is a convex-concave function. Moreover, $\max_{y\in\mathcal{Y}}f^L(x,y) = \max_{y\in\mathcal{Y}}E_\tau[f(S_\tau,y)]$, where $S_\tau=\{i\in [n]:x(i)>\tau\}.$
\end{lemma}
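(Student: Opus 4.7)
The plan is to decompose the lemma into its two assertions and verify each by appealing to the two characterisations of the Lovász extension supplied by Definition~\ref{def:lovae} and Remark~\ref{rem:lovae}, combined with Assumption~\ref{assum:main}.

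For the convex-concave property, I would argue separately in each variable. Fixing any $y\in\mathcal{Y}$, Assumption~\ref{assum:main} ensures that the set function $f(\cdot,y)\colon 2^{[n]}\to\R$ is submodular, so by the classical result of Lovász its extension $f^L(\cdot,y)$ is convex on $[0,1]^n$. For concavity in $y$, fix any $x\in[0,1]^n$ and take the chain $A_0\subset A_1\subset\cdots\subset A_p$ together with weights $\lambda_i>0$, $\sum_{i=0}^p\lambda_i=1$, as in Definition~\ref{def:lovae}. Crucially, this chain and these weights depend only on $x$ and not on $y$, so
\[
f^L(x,y)=\sum_{i=0}^p \lambda_i\, f(A_i,y)
\]
realises $f^L(x,\cdot)$ as a convex combination of the maps $y\mapsto f(A_i,y)$, each of which is concave by Assumption~\ref{assum:main}. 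A non-negative combination of concave functions is concave, hence $f^L(x,\cdot)$ is concave on $\mathcal{Y}$.

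For the maximum identity, I would invoke Remark~\ref{rem:lovae} applied to the submodular function $f(\cdot,y)$ for each fixed $y$, which gives $f^L(x,y)=E_\tau[f(S_\tau,y)]$ with $\tau$ uniform on $[0,1]$ and $S_\tau=\{i\in[n]\colon x(i)>\tau\}$ depending only on $x$. Taking the supremum over $y\in\mathcal{Y}$ on both sides yields the claim.

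The main subtlety is to notice that both the chain representation in Definition~\ref{def:lovae} and the level-set representation in Remark~\ref{rem:lovae} are determined entirely by $x$, which is what allows one to pull concavity in $y$ through the sum or the expectation uniformly; I expect this to be the only point requiring care, as the remaining steps are direct applications of the cited definitions and standard facts about concavity.
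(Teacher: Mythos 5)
Your proposal is correct and follows essentially the same route as the paper: convexity in $x$ via the classical Lovász result, concavity in $y$ by viewing $f^L(x,\cdot)$ as a fixed (depending only on $x$) convex combination of the concave maps $y\mapsto f(A_i,y)$, and the max identity from the level-set representation. The only cosmetic difference is that you cite Remark~\ref{rem:lovae} directly for each fixed $y$, whereas the paper re-derives the identity $E_\tau[f(S_\tau,y)]=f^L(x,y)$ by explicitly computing the integral against the ordered-components formula of the Lovász extension; both are valid.
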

A proof of Lemma~\ref{lm:xylova} is given in Appendix~\ref{app:proof}.
In \citep[Lem. 2.2]{axelrod2020near}, 
the authors showed that we can go from an approximate minimiser of $f^L$ to an approximate minimiser of $f$ in $O(n)$ function calls.
Thus, leveraging this fact and considering Lemmas~\ref{lm:lmlova} and \ref{lm:xylova}, we can consider the Lovász extension of $f(S,y)$ with respect to the first variable, solve the non-smooth convex-concave min-max problem of the form
\begin{equation}\label{mainp2}
		\underset{x \in [0,1]^n}{\min}\underset{y \in \mathcal{Y}}{\max}\;f^L(x,y),
\end{equation}
instead of Problem~\eqref{mainp}, and recover the solution of the min-max Problem \eqref{mainp}. 
As $f^L$ is non-smooth, ZO methods are good candidates to solve \eqref{mainp2}. Next, we define the maximal chain associated with the vector $x$ and the subgradient of the Lovász extension of a submodular function.
\begin{definition}[Maximal Chain Associated with $x$ \cite{hazan2012online}, Def.~6]\label{def:maxchain}
Let $x \in [0,1]^n$, and let $A_0 \subset A_1 \subset \cdots \subset A_p$, $p\leq n$, be the unique chain such that 
\(
x = \sum_{i=0}^{p} \mu_i \chi_{A_i},
\)
where $\mu_i > 0$ and $\sum_{i=0}^{p} \mu_i = 1$. 
A maximal chain associated with $x$ is any maximal completion of the $A_i$ chain; that is, 
a maximal chain 
\[
\emptyset = B_0 \subset B_1 \subset B_2 \subset \cdots \subset B_n = [n]
\]
such that for all $i\in [p]$ there exists $j\in [n]$ with 
$A_i=B_j$.
\end{definition}
\begin{definition}[Lovász Extension Subgradient  \cite{hazan2012online}, Prop.~7]\label{def:subg}
    Let $\bar{f}:2^{[n]}\rightarrow \mathbb{R}$ be a submodular function and $\bar{f}^L$ be its Lovász extension. Let $x \in [0,1]^n$, let
    $\emptyset = B_0 \subset B_1 \subset B_2 \subset \cdots \subset B_n = [n]$ be an arbitrary maximal chain associated with $x$, and let $\pi : [n] \to [n]$ be the corresponding permutation. Then, a subgradient $g_x=[g_1,\ldots,g_n]^\top$ of $\bar{f}^L$ 
    at $x$ is given by
\begin{align}\label{eq:subg}
   g_i = \bar{f}(B_{\pi(i)}) - \bar{f}(B_{\pi(i)-1}), \qquad i\in [n]. 
\end{align}
\end{definition}
Complementary definitions, and lemmas related to submodular functions are given in Appendix~\ref{app:sub}.

\subsection{Relations between \eqref{mainp} 
and \eqref{mainp2}}
In this section, we discuss sufficient conditions for the existence of saddle points $(S^\ast,y^\ast)$ for $f.$ Moreover, we explain the relationship between the solutions to \eqref{mainp} 
and \eqref{mainp2}. 
\begin{proposition}\label{prp:ngs}
    Let $f:2^{[n]}\times\R^m\rightarrow \mathbb{R}$ be a submodular-concave function and $f^L$ be the Lovász extension of $f$ with respect to the first variable. In general, the existence of a saddle point for $f$ is not guaranteed, and we have
    \begin{align}
\underset{y \in \mathcal{Y}}{\max}\underset{S \subset [n] }{\min}\;f(S,y)\leq\underset{S \subset [n]}{\min}\underset{y \in \mathcal{Y}}{\max}\;f(S,y). \label{eq:max_min_leq1}
\end{align}
The existence of a saddle point for $f$ is guaranteed and solutions of \eqref{mainp} 
and \eqref{mainp2} coincide if 
\begin{align}\label{eq:ver1}
    \underset{x \in [0,1]^n}{\min}\underset{y \in \mathcal{Y}}{\max}\;f^L(x,y)=\underset{x \in \{0,1\}^n}{\min}\underset{y \in \mathcal{Y}}{\max}\;f^L(x,y).
\end{align}
\end{proposition}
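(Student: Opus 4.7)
The inequality in \eqref{eq:max_min_leq1} follows from the classical weak-duality argument: for any fixed $(S_0,y_0)\in 2^{[n]}\times\mathcal{Y}$ we have $\min_{S\subset[n]} f(S,y_0)\leq f(S_0,y_0)\leq \max_{y\in\mathcal{Y}} f(S_0,y)$, so taking the $\max$ over $y_0$ on the left and then the $\min$ over $S_0$ on the right yields \eqref{eq:max_min_leq1}. The fact that equality, and hence a saddle point, need not exist is most cleanly shown by a small explicit example on a one- or two-element ground set where the outer discrete minimisation is forced to a corner while the inner concave maximisation chooses a mixed point.

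For the second part, the backbone of my argument is Sion's minimax theorem applied to $f^L$. Lemma~\ref{lm:xylova} gives convex--concavity of $f^L$ on the convex compact product $[0,1]^n\times\mathcal{Y}$; continuity in $y$ is inherited from $f(S,\cdot)$ through the convex-combination formula in Definition~\ref{def:lovae}, and continuity in $x$ is the standard piecewise-linear property of the Lovász extension. Sion therefore produces
\begin{equation*}
    \min_{x\in[0,1]^n}\max_{y\in\mathcal{Y}} f^L(x,y) \;=\; \max_{y\in\mathcal{Y}}\min_{x\in[0,1]^n} f^L(x,y).
\end{equation*}
Since $f(\cdot,y)$ is submodular by Assumption~\ref{assum:main} and $f^L(\chi_S,y)=f(S,y)$ by Remark~\ref{rem:lovae}, I apply the standard submodular-minimisation identity (Lemma~\ref{lm:lmlova}) pointwise in $y$ to obtain $\min_{x\in[0,1]^n} f^L(x,y) = \min_{S\subset[n]} f(S,y)$, and likewise $\min_{x\in\{0,1\}^n}\max_y f^L(x,y)=\min_{S\subset[n]}\max_y f(S,y)$.

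Chaining these identities with assumption \eqref{eq:ver1} gives
\begin{equation*}
    \min_{S\subset[n]}\max_{y\in\mathcal{Y}} f(S,y) \;=\; \min_{x\in\{0,1\}^n}\max_y f^L(x,y) \;=\; \min_{x\in[0,1]^n}\max_y f^L(x,y) \;=\; \max_y\min_S f(S,y),
\end{equation*}
which, combined with \eqref{eq:max_min_leq1}, is strong duality for $f$. To construct a saddle point, I take any $x^{*}=\chi_{S^{*}}\in\{0,1\}^n$ attaining the outer minimum in \eqref{eq:ver1} together with any maximiser $y^{*}\in\mathcal{Y}$ of $f(S^{*},\cdot)$, and verify Definition~\ref{def:saddle} by sandwiching $f(S^{*},y^{*})$ between the two sides of the just-established equality. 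The main obstacle I anticipate is the careful bookkeeping around Sion's theorem, in particular verifying joint continuity of $f^L$ on $(x,y)$, and then explaining that only under \eqref{eq:ver1} does a minimiser of the continuous relaxation collapse onto a vertex of the hypercube, which is precisely what transports the saddle-point property from $f^L$ back to $f$.
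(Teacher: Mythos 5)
Your overall route for the second claim is the same as the paper's: apply Sion's minimax theorem to the convex--concave, continuous $f^L$ on the compact product $[0,1]^n\times\mathcal{Y}$, transfer the inner minimisation between $[0,1]^n$, $\{0,1\}^n$ and $2^{[n]}$ via Lemma~\ref{lm:lmlova} and the identity $f^L(\chi_S,y)=f(S,y)$, and then use \eqref{eq:ver1} to close the chain into strong duality for $f$. One small difference: you obtain \eqref{eq:max_min_leq1} by the bare weak-duality argument, which needs no structure on $f$ at all, whereas the paper derives it from the same Lov\'asz/Sion chain together with $\min_{x\in[0,1]^n}\max_y f^L\leq\min_{x\in\{0,1\}^n}\max_y f^L$. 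Your version is more elementary and both are valid.

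There is, however, a genuine flaw in your explicit saddle-point construction. You take $S^*$ attaining $\min_S\max_y f(S,y)$ and then \emph{any} maximiser $y^*$ of $f(S^*,\cdot)$. The first saddle inequality $f(S^*,y)\leq f(S^*,y^*)$ is then immediate, but the second one, $f(S^*,y^*)\leq f(S,y^*)$ for all $S$, is equivalent to $\min_S f(S,y^*)=\max_y\min_S f(S,y)$, i.e.\ to $y^*$ attaining the max--min value --- which an arbitrary maximiser of $f(S^*,\cdot)$ need not do. Concretely, take $n=1$, $\mathcal{Y}=[0,1]$, $f(\emptyset,y)=0$ and $f(\{1\},y)=y-\tfrac12$: this is submodular--concave, \eqref{eq:ver1} holds (the minimiser of $\max_y f^L(x,y)=x/2$ is the vertex $x=0$), strong duality holds with value $0$ and $S^*=\emptyset$, and every $y\in[0,1]$ maximises $f(\emptyset,\cdot)\equiv 0$; yet choosing $y^*=0$ gives $f(\emptyset,0)=0>-\tfrac12=f(\{1\},0)$, so $(\emptyset,0)$ is not a saddle point. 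The fix is standard: choose $y^*$ as a maximiser of the concave, continuous function $y\mapsto\min_{S\subset[n]}f(S,y)$ over the compact set $\mathcal{Y}$; with that choice your sandwich argument goes through. The paper sidesteps the issue by inferring existence of a saddle point directly from the equality of the two optimal values rather than exhibiting one, but if you do exhibit one, the choice of $y^*$ matters.
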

A proof of Proposition~\ref{prp:ngs} is given in Appendix~\ref{app:proof}.
Next, we give some conditions that guarantee that \eqref{eq:ver1} holds. 
\begin{proposition}\label{Prp:sadex}
    Let $f:2^{[n]}\times\R^m\rightarrow \mathbb{R}$ be a submodular-concave function and $f^L$ be the Lovász extension of $f$ with respect to the first variable. Let 
    $\zeta(x) = \max_{y \in \mathcal{Y}}\;f^L(x,y)$ 
    for $\mathcal{Y}\subset \R^m$ convex and compact. Then, \eqref{eq:ver1} 
    holds, and the existence of a saddle point for $f$ is guaranteed if one of the following conditions is satisfied: i)  $\zeta(x)$ itself is a Lovász extension of a submodular function. ii) The set of minimisers of $\zeta(x)$ contains a vertex. iii) The family of functions $\{f^L(\cdot,y):y\in\mathcal{Y}\}$ has a common
    vertex minimiser.
\end{proposition}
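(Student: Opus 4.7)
The plan is to show that under any of the three conditions, the function $\zeta(x)=\max_{y\in\mathcal{Y}}f^L(x,y)$ attains its minimum on $[0,1]^n$ at some vertex $x^\ast\in\{0,1\}^n$. Once this is in hand, the equality
\[
\min_{x\in[0,1]^n}\zeta(x)=\zeta(x^\ast)=\min_{x\in\{0,1\}^n}\zeta(x)
\]
is immediate, since the left-hand side is trivially upper-bounded by the right-hand side and the vertex minimiser $x^\ast$ supplies the reverse inequality. This is precisely \eqref{eq:ver1}, so Proposition~\ref{prp:ngs} then delivers both the coincidence of the solutions of \eqref{mainp} and \eqref{mainp2} and the existence of a saddle point for $f$. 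Thus the task reduces to verifying, case by case, that $\zeta$ admits a vertex minimiser.

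Case (ii) is true by hypothesis, since a vertex already belongs to $\mathrm{argmin}_{x\in[0,1]^n}\zeta(x)$, so nothing further is required. For Case (iii), I would fix a common vertex minimiser $x^\ast\in\{0,1\}^n$ of the family $\{f^L(\cdot,y):y\in\mathcal{Y}\}$, so that $f^L(x^\ast,y)\le f^L(x,y)$ for every $x\in[0,1]^n$ and every $y\in\mathcal{Y}$. Because the pointwise inequality is preserved under the maximum over $y\in\mathcal{Y}$, one obtains $\zeta(x^\ast)\le\zeta(x)$ for all $x\in[0,1]^n$, so $x^\ast$ is a vertex minimiser of $\zeta$, reducing Case (iii) to Case (ii).

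For Case (i), I would invoke the classical Lovász correspondence: if $\zeta$ is itself the Lovász extension of some submodular function $h:2^{[n]}\to\R$, then by Definition~\ref{def:lovae} and the standard Lovász minimisation equivalence used, e.g., in \cite{axelrod2020near}, minimising $\zeta$ over $[0,1]^n$ is equivalent to minimising $h$ over $2^{[n]}$; in particular, the characteristic vector of an optimal subset is a vertex of $[0,1]^n$ that minimises $\zeta$ on the hypercube.

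The main obstacle, modest as it is, lies in Case (i): one must cleanly invoke the fact that the Lovász extension of a submodular function attains its minimum at a vertex of $[0,1]^n$, which is a foundational SFM result but deserves to be stated explicitly. The monotonicity argument in Case (iii) is elementary once the common-minimiser hypothesis is unpacked, and Case (ii) is tautological. With these three cases in place, the conclusion follows directly from Proposition~\ref{prp:ngs}.
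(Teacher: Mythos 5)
Your proposal is correct and follows essentially the same route as the paper: cases (ii) and (iii) are handled by observing that a vertex minimiser of $\zeta$ (directly, or inherited from a common vertex minimiser of the family $\{f^L(\cdot,y)\}$ via monotonicity of the max) yields \eqref{eq:ver1}, and case (i) is handled by the standard Lovász minimisation equivalence (Lemma~\ref{lm:lmlova}(iv)--(v)), after which Proposition~\ref{prp:ngs} gives the saddle-point conclusion. No gaps.
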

A proof of Proposition~\ref{Prp:sadex} is given in Appendix~\ref{app:proof}.
In Appendix~\ref{app:example}, we present two simple examples for the case when 
$\underset{y \in \mathcal{Y}}{\max}\underset{S \subset [n] }{\min}\;f(S,y)<\underset{S \subset [n] }{\min}\underset{y \in \mathcal{Y}}{\max}\;f(S,y)$, i.e., 
$f(S,y)$ has no saddle point and for the case that $\underset{y \in \mathcal{Y}}{\max}\underset{S \subset [n] }{\min}\;f(S,y)=\underset{S \subset [n] }{\min}\underset{y \in \mathcal{Y}}{\max}\;f(S,y)$, i.e.,
$f(S,y)$ has a saddle point. 
Throughout the following
sections, we assume that a saddle point (can be non-unique) for problems \eqref{mainp} 
exists. Thus, we can focus on solving \eqref{mainp2} instead of \eqref{mainp} and leverage Lemma~\ref{lm:xylova}. 
As $f^L(x,y)$ is non-differentiable with respect to $x$, and we consider the case that it is possibly non-smooth with respect to $y$, we use ZO algorithms 
to solve \eqref{mainp2}. In the next sections, we introduce a ZO framework for solving \eqref{mainp2} and investigate its 
performance properties.
\subsection{Gaussian Smoothing}
Following \cite{nesterov_random_2017}, we define the Gaussian smoothed version of a continuous function $\tilde{f}:\mathbb{R}^m\rightarrow\mathbb{R},$ termed $\tilde{f}_\mu$ as 
\begin{align}\label{eq:gsmooth}
	\begin{split}
		\tilde{f}_\mu(y) &= \frac{1}{\kappa}\int_{\R^m}\tilde{f}(y+\mu u)e^{-\frac{1}{2}u^\top B u}\mathrm{d}u,\\
        \kappa &=\int_{\R^m}e^{-\frac{1}{2}u^\top B u}\mathrm{d}u=\frac{(2\pi)^{m/2}}{[\det B]^{\frac{1}{2}}},
	\end{split}
\end{align}
where vector $u \in \R^m$ is sampled from $\mathcal{N}(0,B^{-1})$, $B\in \R^{m\times m}$ is positive definite, and $\mu \in \R_{>0}$ is the 
smoothing parameter. In this work, we assume $B=\mathbb{I}.$ In \cite{nesterov_random_2017}, it is shown that independent of 
$\tilde{f}$ being 
differentiable or not, $\tilde{f}_\mu$ is always differentiable, and
$\nabla \tilde{f}_\mu(y) = E_u[\frac{\tilde{f}(y+\mu u)}{\mu}u].$ 
Instead of this one-point gradient estimation, to reduce the variance of the estimator, we consider the two-point estimator and  define the random oracle $g_{\mu}$ as
\begin{equation}\label{eq:eq23}
	g_{\mu}(y) = \tfrac{1}{\mu}(\tilde{f}(y+\mu u)-\tilde{f}(y)) Bu. 
\end{equation}

\section{Main Results}\label{sec:main}
Here, we first introduce the proposed ZO framework for solving \eqref{mainp2} from which a solution for \eqref{mainp} can be deduced. Next, we investigate the application of the ZO algorithm in solving offline and online min-max problems with possibly non-differentiable submodular-concave cost functions.
\subsection{Zeroth-order Framework}
Here, we introduce an algorithm to solve 
Problem~\ref{mainp}. Let
\begin{align}
    z=\left[ \hspace{-0.075cm} \begin{array}{c}
         x  \\
         y 
    \end{array} \hspace{-0.075cm} \right]\hspace{-0.075cm},  \ 
    G(z)=\left[ \hspace{-0.075cm} \begin{array}{c}
         g_x(z)\\ 
         -g_{\mu_y}(z)
    \end{array} \hspace{-0.075cm} \right] \hspace{-0.075cm},  \
    \mathcal{Z}=[0,1]^n\times\mathcal{Y},\label{eq:G_definition}
\end{align}
where $g_x$ is the subgradient of $f^L$ with respect to $x$ defined in
Definition~\ref{def:subg} 
and $g_{\mu_y}$ is the random oracle obtained similar to \eqref{eq:eq23} using $f^L$ as below
\begin{align}\label{eq:gmu}
    g_{\mu_y}(y) = \tfrac{1}{\mu} (f^L(x,y+\mu u)-f^L(x,y)) u, 
\end{align}
where $u\sim\mathcal{N}(0,\mathbb{I}).$
Moreover, let
\begin{align}
    \hat{F}(z)=\left[ \hspace{-0.075cm} \begin{array}{c}
         g_x(z)  \\
         -\nabla_y f_\mu^L(z) 
    \end{array} \hspace{-0.075cm} \right],  \ 
    F(z)=\left[ \hspace{-0.075cm} \begin{array}{r}
         g_x(z)\\ 
         -g_{y}(z)
    \end{array} \hspace{-0.075cm} \right], \label{eq:F_definition}
\end{align}
where $\nabla_y f_\mu^L$ is the gradient of the Gaussian smoothed version of $f^L$  with respect to $y$ and $g_{y}$ is the supergradient of $f^L$ with respect to $y$ (or $-g_{y}$ is subgradient of $-f^L$ with respect to $y$). 
It is straightforward to see that $E_u[G(z)]=\hat{F}(z)$ \cite{nesterov_random_2017}, (21).

As $f^L(x,y)$ is non-differentiable with respect to $x$ and possibly non-differentiable in $y,$ ZO methods and subgradient methods are natural selections to solve \eqref{mainp2}.
We assume that access to $f$ is provided only through a value oracle. That is, given any set $S \subset [n]$ and $y\in\mathcal{Y}$, the oracle returns the value $f(S,y)$.
Considering Definition~\ref{def:subg}, we can calculate $g_x$ simply by queries of $f$ without the need of extra information. 
We assume there is no access to the supergradient of $f^L$ with respect to $y,$ which is a first-order information, and instead 
we use the random oracle \eqref{eq:gmu} to update 
$y.$ 
Hence, calculating $G(z)$ in \eqref{eq:G_definition} only requires 
the zeroth-order information. 
The overall algorithm to solve \eqref{mainp2} is summarised in Algorithm \ref{alg:ZOEG}.
\begin{algorithm}[tb]
	\caption{ZO-EG}\label{alg:ZOEG}
	\begin{algorithmic}[1]
		\STATE Input: $z_0\in\mathcal{Z},\;\{h_{1_k}\}_{k=0}^N\subset \mathbb{R}_{>0},\{h_{2_k}\}_{k=0}^N\subset \mathbb{R}_{>0},\;\mu>0,\;N,t_k\in\mathbb{N}$
		\FOR {$k = 0,\dots, N$}
            \STATE \textit{// (i) Forward step: estimate the operator at $z_k$}
		\STATE Sample $u_k^1, \dots, u_k^{t_k}$ from $\mathcal{N}(0, \mathbb{I})$
            \STATE Compute $g_{\mu_y}^{1:t_k}(z_k)$ using $u_k^{1:t_k}$ and~\eqref{eq:gmu}.
            \STATE Obtain $g_{\mu_y}(z_k) = \frac{1}{t_k} \sum_{i=1}^{t_k} g_{\mu_y}^i(z_k)$ \label{line:5}
            \STATE Calculate $g_x(z_k)$ using \eqref{eq:subg}
            \STATE Generate $G(z_k)$ using \eqref{eq:G_definition}
            \STATE \textit{// (ii) Extrapolation step: lookahead and re-estimate}
		\STATE $\hat{z}_{k} = \mathrm{Proj}_{\mathcal{Z}}(z_k - h_{1_k}G(z_k))$            \label{line:8}
            \STATE Sample $\hat{u}_k^1, \dots, \hat{u}_k^{t_k}$ from $\mathcal{N}(0, \mathbb{I})$
            \STATE Compute $g_{\mu_y}^{1:t_k}(\hat{z}_k)$ using $\hat{u}_k^{1:t_k}$ and~\eqref{eq:gmu}.
            \STATE Obtain $g_{\mu_y}(\hat{z}_k) = \frac{1}{t_k} \sum_{i=1}^{t_k} g_{\mu_y}^i(\hat{z}_k)$ \label{line:11}
            \STATE Calculate $g_x(\hat{z}_k)$ using \eqref{eq:subg}
            \STATE Generate $G(\hat{z}_k)$ using \eqref{eq:G_definition}
            \STATE \textit{// (iii) Projection and update}
		\STATE $z_{k+1} = \mathrm{Proj}_{\mathcal{Z}}(z_k - h_{2_k}G(\hat{z}_k))$            \label{line:14}
		\ENDFOR
		\STATE return $\hat{z}_k$ and $z_k$ for all $k = 0,\dots, N.$
	\end{algorithmic}
\end{algorithm}

In each iteration $k\in [N]$ 
of Algorithm~\ref{alg:ZOEG}, first, we sample $t_k \in \N$ number of directions from the standard Gaussian distribution and calculate the random oracles and leverage the average of the oracles. It is trivial to see that with $t_k$ 
samples instead of one, still $E[g_\mu(x_k)] = \nabla f^L_\mu(x_k),$ while the variance of $g_\mu(x_k)$ will be reduced \cite{farzin2025minmax}. Then, we calculate the subgradient $g_x$ using \eqref{eq:subg} and obtain $G(z_k)$ defined in \eqref{eq:G_definition}. By performing
a descent step from $z_k$ using $G(z_k)$ with $h_{1_k}$ as the step size and projecting back to $\mathcal{Z}$, we obtain $\hat{z}_k$. 
Similarly, we obtain $G(\hat{z}_k)$ and perform
a descent step from $z_k$ using $G(\hat{z}_k)$ with $h_{2_k}$ as the step size and project back to $\mathcal{Z}$ to obtain $z_{k+1}.$ 
The projection operator
$\mathrm{Proj}_{\mathcal{Z}}:\R^{n+m} \rightarrow \mathcal{Z}$ is defined as
\(
\mathrm{Proj}_{\mathcal{Z}}(\bar{z}) = \arg\underset{z\in\mathcal{Z}}{\min} \|z-\bar{z}\|^2.
\)
Algorithm \ref{alg:ZOEG} returns the sequences $\hat{z}_k$ and $z_k,$ which we can guarantee that they include an $\epsilon$-saddle point of $f^L.$ A more
detailed discussion is given in Theorem~\ref{th:off}.

We know that $f^L$ is continuous and piecewise affine with respect to $x$, and for fixed $y\in \mathcal{Y}$, $f^L(\cdot,y)$ is Lipschitz continuous 
with some Lipschitz constant $L_{0x}>0$ defined through the maximal slope of the piecewise affine function. 
Since $\mathcal{Y}$ is compact and considering \citep[Thm.~24.7]{rockafellar2015convex}, the subgradient of $f^L(\cdot,y)$ 
is upper bounded by $L_{0x},$ i.e., $\|g_x(x,y)\|\leq L_{0x}$ for all $x\in[0,1]^n$, for all $y\in \mathcal{Y}$. Moreover, based on $f^L$ being Lipschitz, 
we have $|f^L(x,y)|\leq M$ for all $x\in[0,1]^n$ and $y\in\mathcal{Y},$ where $M$ is a positive scalar.
Following \cite{hazan2012online}, in each iteration of Algorithm~\ref{alg:ZOEG} we
choose thresholds $\tau,\hat{\tau}\in[0,1]$ uniformly at random, and define the sets 
\begin{equation}\label{eq:Sk}
    S_k\!=\!\{i\in[n]:x_k(i)>\tau\},\quad \hat{S}_k=\{i\in[n]:\hat{x}_k(i)>\hat{\tau}\}.
\end{equation}
Next, we present our main results. 
We conclude this section with a general assumption used to simplify the statements.
\begin{assumption}\label{assum:lipsch}
    Function $f: 2^{[n]}\times\mathbb{R}^m\to\mathbb{R}$ is Lipschitz continuous with respect to the second variable with Lipschitz constant $L_{0y}>0.$
\end{assumption}

\subsection{Offline Settings}
In this section, we consider the offline version of Problem~\eqref{mainp} and investigate the performance of Algorithm~\ref{alg:ZOEG} in finding an 
$\epsilon$-saddle point (according to Definition \ref{def:eps_saddle}) of the cost function. 
The first theorem shows that for proper choices of the hyperparameters, the sequence generated by Algorithm~\ref{alg:ZOEG} 
converge to an $\epsilon$-saddle point of $f^L$ and $f$, in expectation. 
\begin{theorem}\label{th:off}
Consider Algorithm \ref{alg:ZOEG} with output $\{z_k\}_{k\geq0}$ and $\{\hat{z}_k\}_{k\geq0}$ and let $f:2^{[n]}\times\mathbb{R}^m\to\mathbb{R}$ satisfy Assumptions~\ref{assum:main} and \ref{assum:lipsch}. Let $f^L$ be the corresponding Lovász extension with respect to $x$ with $z^\ast =(x^\ast,y^\ast) \in [0,1]^n\times \mathcal{Y}$ as a saddle point. Let 
$L_0 = \min\{L_{0x},4M\}$, 
$N\geq0$ be the number of iterations, $t_k=t=1$ be the number of samples, $\mu>0$ be the  smoothing parameter in \eqref{eq:gmu}, and $\mathcal{U}_k = [u_0,\hat{u}_0 ,u_1,\hat{u}_1,\cdots,u_k,\hat{u}_k]$. 
     Then, for any iteration $N$, with constant step sizes \(h_{1,k}= h_1 \) and \(h_{2,k}= h_2 \),
     we have the restricted gap of $f^L$ at $\hat{z}_k$ bounded by
	\begin{align}
	&\frac{1}{N+1}\sum_{k=0}^N E_{\mathcal{U}_k}[R^L(\hat{z}_k)]\leq \frac{r_0^2}{2h_2(N+1)} \label{eq:eqoff} \\
    &\quad+(\tfrac{h_2}{2}+h_1)(L_0^2+L_{0y}^2(m+4)^2)+\mu L_{0y}m^{1/2}, \nonumber
    \end{align}
    where $r_0=\|z_0-z^\ast\|$ and $R^L(\bar{z}) =f^L(\bar{x},y^\ast)-f^L(x^\ast,\bar{y})$. Additionally,
    let $z^{\mathrm{av}}_N=(x^{\mathrm{av}}_N,y^{\mathrm{av}}_N)=\frac{1}{N+1}\sum_{k=0}^N\hat{z}_k$ denote the averaged iterate. Then,
    the duality gap of $f^L$ at $z^{\mathrm{av}}_N$ is bounded by
    {\begin{align}
	&E_{\mathcal{U}_N}[D^L(z^{\mathrm{av}}_N)]\leq \frac{\bar{r}_0^2}{h_2(N+1)} \label{eq:eqoff1} \\
    &\quad+(h_2+h_1)(L_0^2+L_{0y}^2(m+4)^2)+\mu L_{0y}m^{1/2}, \nonumber
    \end{align}}%
    where $\bar{r}_0=\max_{z\in\mathcal{Z}}\|z_0-z\|$ and $D^L(\bar{z}) =\max_{y\in\mathcal{Y}}f^L(\bar{x},y)-\min_{x\in[0,1]^n}f^L(x,\bar{y})$.
    Moreover, let $\epsilon>0$, $\mu\leq\frac{\epsilon}{2{L_{0y}}m^{\frac{1}{2}}},$
    \begin{align*}
      &h_1 \leq \frac{1}{(N+1)^{\frac{1}{2}}({L_0^2}+L_{0y}^2(m+4)^2)^{1/2}}, \\
      & h_2 = \frac{\bar{r}_0}{(N+1)^{\frac{1}{2}}({L_0^2}+L_{0y}^2(m+4)^2)^{1/2}},  \\
      & N\geq\Big\lceil \frac{4({2\bar{r}_0}+1)^2({L_0^2}+L_{0y}^2(m+4)^2)}{\epsilon^2}-1\Big\rceil.
    \end{align*}
Then, sampling $\tau$ uniformly at random from $[0,1]$ and letting $\bar{S}_N=\{i\in[n]: x^{\mathrm{av}}_N(i)>\tau\}$ and $\bar{y}_N=y^{\mathrm{av}}_N$, we have
    \begin{align}\label{eq:eqoff2}
	E_{\mathcal{U}_{{N}}}[D_\tau(\bar{S}_N,\bar{y}_N)]\leq\epsilon
    \end{align}
    where
    \begin{align*}
    D_\tau(S_k,y_k) = \max_{y\in\mathcal{Y}}E_\tau[f(S_k,y)]-\min_{S \subset [n]}f(S,y_k),
     \end{align*}
    and $\tau$ is the threshold used to round $x^{\mathrm{av}}_N$, cf.\ \eqref{eq:Sk}.
\end{theorem}
Proof of Theorem~\ref{th:off} is given in Appendix~\ref{app:proof}.
Theorem~\ref{th:off} shows that, in expectation, the average of the sequence generated by Algorithm~\ref{alg:ZOEG} is an $\epsilon$-saddle point of $f^L$, and that the rounded pair $(\bar{S}_N,\bar{y}_N)$, which is computable from the averaged iterate by a single uniform sample of $\tau$, is an $\epsilon$-saddle point of $f$ in the sense of \eqref{eq:eqoff2}.
\begin{remark}[Per-iteration cost of Algorithm~\ref{alg:ZOEG}]
	Computing subgradients twice (Definition~\ref{def:subg}) requires $2n$ evaluations of $f,$ and
	evaluating Lovász extensions 4 times for the ZO oracle (see~\eqref{eq:gmu}) requires $4n$ evaluations.
	The total per-iteration cost is therefore $6n$ function evaluations.
	With $N = O(m^2 \epsilon^{-2})$ iterations (Theorem~\ref{th:off}),
	the total sample complexity is $O(n m^2 \epsilon^{-2})$.
	The ZO estimator acts only on $y \in \mathbb{R}^m$,
	so the iteration count depends on $m$, not $n$.
	The factor $n$ is purely per-iteration cost from the Lovász extension and subgradient computations.
\end{remark}

\subsection{Online Settings}\label{sec:mainon}
Now, we analyse the online min-max problem with submodular concave functions using a ZO framework. 
In the online setting, over a sequence of iterations $k=0,1,\dots,N$ for some $N\in \N$, 
an online decision maker repeatedly selects a subset \( S_k \subset [n] \) and $y_k\in\mathcal{Y}$.
After choosing \( (S_k,y_k) \), the cost is determined by a submodular-concave function \( f_k : 2^{[n]}\times\R^m \rightarrow \mathbb{R} \), and the decision maker incurs the cost \( f_k(S_k,y_k) \). To proceed,
following \cite{zhang2022no} we need to define a notion of optimality in this problem setting.
\begin{definition}\label{def:on_dp}
The online duality gap of the decision maker is defined as:
\[
\text{Dual-Gap}_N := \sum_{k=0}^N \max_{y\in\mathcal{Y}}f_k(S_k,y) - \min_{S \subset [n]} 
f_k(S,y_k).
\]
\end{definition}
If the sets \( S_k \) or \(y_k\) are chosen by a randomised algorithm, the expected regret over the randomness in the algorithm is considered. Similar to the offline setting, we can define the online restricted gap as follows. 
\begin{definition}\label{def:on_rdp}
The online restricted gap of the decision maker is defined as:
\[
\text{RD-Gap}_N := \sum_{k=0}^N f_k(S_k,y^\ast_k) - f_k(S^\ast_k,y_k),
\]
where $(S^\ast_k,y^\ast_k)$ is a
saddle point of $f_k$.
\end{definition}
Definition~\ref{def:on_rdp} is also known as the online duality gap and used in works such as \cite{meng2025modular}.
For the analysis of the online setting, following \cite{zinkevich2003online}, we need to define the total path length of a sequence $(z_0,\dots,z_N)$ as
\begin{align}\label{eq:pathl}
    P_N(z_0,\dots,z_N)=\sum_{i=1}^N\|z_i-z_{i-1}\|,
\end{align}
which allows us to present our main result for this section.
Also, We denote the Lovász extension of $f_k$ with respect to the first variable by $f^L_k.$ Similar to the offline case, $f^L_k$ is Lipschitz continuous with $L_{0x,k}$. Moreover, based on $f^L_k$ being Lipschitz continuous, we have $|f^L_k(x,y)|\leq M_k$ for all $x\in[0,1]^n$ and $y\in\mathcal{Y},$ where $M_k$ is a positive scalar.

Next, we need to alter the random oracle defined in \eqref{eq:gmu} to adapt it to this setting. Here we assume that before each function alteration from $f_k$ to $f_{k+1},$ we have enough time for at least four function queries (a discussion on relaxing this assumption to three function queries is provided in Appendix~\ref{app:rem}).  As mentioned after \eqref{eq:gsmooth} and before \eqref{eq:gmu}, 
we know that $\nabla f^L_{\mu,k}(y) = E_u[\frac{f^L_{k}(y+\mu u)}{\mu}u]$ and $E[u]=0.$ Thus, we can define 
\begin{align}\label{eq:gon1}
    g_{\mu_y,k}(x,y) &= \tfrac{1}{\mu} (f^L_{k}(x,y+\mu u)-f^L_{k}(x,y)) u, 
\end{align}
with
$E_u[g_{\mu,k}(x,y)]=\nabla_y f^L_{\mu,k}(x,y).$ In the online case, we consider Algorithm~\ref{alg:ZOEG} replacing $G(z_k)$ and $G(\hat{z}_k)$ with 
\begin{align}
    G_k(z_k)\!=\!\left[ \hspace{-0.2cm} \begin{array}{c}
         g_{x,k}(z_k)\\ 
         -g_{\mu_y,k}(z_k)
    \end{array} \hspace{-0.2cm} \right]\hspace{-0.1cm},\; G_{k}(\hat{z}_k)\!=\!\left[ \hspace{-0.2cm} \begin{array}{c}
         g_{x,k}(\hat{z}_k)\\ 
         -g_{\mu_y,k}(\hat{z}_k)
    \end{array} \hspace{-0.2cm} \right]
\end{align}
and similar to \eqref{eq:F_definition}, we define
\begin{align}
    \hat{F}_k(z)=\left[ \hspace{-0.075cm} \begin{array}{c}
         g_{x,k}(z)  \\
         -\nabla_y f_{k_\mu}^L(z) 
    \end{array} \hspace{-0.075cm} \right],  \ 
    F_k(z)=\left[ \hspace{-0.075cm} \begin{array}{r}
         g_{x,k}(z)\\ 
         -g_{y,k}(z)
    \end{array} \hspace{-0.075cm} \right], \label{eq:Fk_definition}
\end{align}
where $\nabla_y f_{k_\mu}^L$ is the gradient of the Gaussian smoothed version of $f_k^L$  with respect to $y,$ $g_{x,k}$ is the subgradient of $f_k^L$ with respect to $x,$ and $g_{y,k}$ is the supergradient of $f_k^L$ with respect to $y.$ 
Now, we can state the main theorem of this section showing that the sequence generated by Algorithm~\ref{alg:ZOEG}, in expectation, achieves an $O(\sqrt{N(1+\bar{P})})$ online duality gap, where $\bar{P}$ is a known upper bound on the path length $\bar{P}_N$ defined below.
\begin{theorem}\label{th:on}
For $k\in [N]$, 
let $f_k:2^{[n]}\times\R^m\to\mathbb{R}$ satisfy Assumptions~\ref{assum:main} and \ref{assum:lipsch} and let $f^L_k$ be their corresponding Lovász extension with respect to the first variable with $z^\ast_k$ as their saddle points.
Let $N\geq0$ be the number of iterations, $ L_0 = \min\{\max_{k\in[N]}\{L_{0x,k}\},4\max_{k\in[N]}\{M_k\}\}$, $L_{0y}=\max_{k\in[N]}\{L_{0y,k}\}$, $t_k=t=1$, 
$D_y$ be diameter of compact convex set $\mathcal{Y}$, $\mu>0$ be the  smoothing parameter in \eqref{eq:gsmooth} and $\mathcal{U}_k = [u_0,\hat{u}_0,u_1,\hat{u}_1,\cdots,u_k,\hat{u}_k]$, 
     $k\in [N]$.
     Moreover, let $\{z_k\}_{k\geq0}$ and $\{\hat{z}_k\}_{k\geq0}$ be the sequences generated by Algorithm~\ref{alg:ZOEG}.  
     Then, for any iteration $N$, with constant step sizes \(h_{1,k}= h_1 \) and \(h_{2,k}= h_2 \),
     we have
     \begin{align}
	&\frac{1}{N+1} E_{\mathcal{U}_{N}}[\text{RD-Gap}^L_N]\leq \frac{e_0^2}{2h_2(N+1)}+\mu L_{0y}m^{\tfrac{1}{2}}\notag\\
    &\!+\!(\tfrac{h_2}{2}+h_1)( L_0^2+L_{0y}^2(m+4)^2)+\frac{3D_zP_N^\ast}{2h_2(N+1)},
     \label{eq:eqon}
\end{align}
    where 
    $\text{RD-Gap}^L_N =\sum_{k=0}^N f^L_k(\hat{x}_k,y^\ast_k) - f^L_k(x^\ast_k,\hat{y}_k)$ and $e_0=\|z_0-z_0^\ast\|,$ $ P_N^\ast=P_N(z^\ast_0,\dots,z^\ast_N),$ $D_z=\sqrt{n+D_y^2}.$
    Additionally, the duality gap satisfies
    \begin{align}
	&\frac{1}{N+1}E_{\mathcal{U}_{N}}[\text{Dual-Gap}_N^L]\leq \frac{E_{\mathcal{U}_N}[\bar{e}_0^2]}{2h_2(N+1)}+\mu L_{0y}m^{\frac{1}{2}}\notag\\
    & \!+\! (\tfrac{h_2}{2}+h_1)( L_0^2+L_{0y}^2(m+4)^2)+\frac{3D_zE_{\mathcal{U}_N}[\bar{P}_N]}{2h_2(N+1)},
   \label{eq:eqon1}
    \end{align}
    where $\bar{e}_0=\|z_0-\bar{z}_0\|,$ $\bar{P}_N=P_N(\bar{z}_0,\dots,\bar{z}_N),$
    \begin{align*}
      &\bar{z}_k=\left(\arg\min_{x\in [0,1]^n} f^L_{k}(x,\hat{y}_k),\arg\max_{y\in \mathcal{Y}} f^L_{k}(\hat{x}_k,y)\right), \\
     &\text{Dual-Gap}_N^L =\sum_{k=0}^N \max_{y\in\mathcal{Y}}f^L_k(\hat{x}_k,y) - \min_{x \in [0,1]^n} f^L_k(x,\hat{y}_k).
    \end{align*}
    Moreover, let $\bar{P}\in\R_{\geq0}$ be a known deterministic constant such that $\bar{P}_N\leq\bar{P}$ almost surely, let $\mu\leq\frac{1}{L_{0y}m^{\frac{1}{2}}(N+1)^{\frac{1}{2}}},$
    \begin{align*}
      &h_1 \leq \frac{1}{( L_0^2+L_{0y}^2(m+4)^2)^\frac{1}{2}(N+1)^\frac{1}{2}}, \\& h_2 = \frac{(D_z^2+3D_z\bar{P})^\frac{1}{2}}{( L_0^2+L_{0y}^2(m+4)^2)^\frac{1}{2}(N+1)^\frac{1}{2}}.
    \end{align*}
Then, it holds that 
    \begin{align}\label{eq:eqon2}
	E_{\mathcal{U}_{N}}[\text{Dual-Gap}_{\tau,N}]\leq O(\sqrt{N(1+\bar{P})})
    \end{align}
    where 
    $\tau$ is the threshold defined in \eqref{eq:Sk} and
    \begin{align*}
        \text{Dual-Gap}_{\tau,N} \!=\! \sum_{k=0}^N \max_{y\in\mathcal{Y}}E_\tau[f_k(\hat{S}_k,y)] \!-\! \min_{S \subset [n]} f_k(S,\hat{y}_k).
    \end{align*}
\end{theorem}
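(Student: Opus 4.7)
The plan is to adapt the standard extragradient analysis underlying the offline Theorem~\ref{th:off} to the online setting by absorbing the drift of the time-varying saddle points $z_k^\ast$ into a path-length term. First I would derive a per-step recursion of the form
\begin{align*}
\|z_{k+1}-z\|^2 \leq \|z_k-z\|^2 - 2h_2\langle G_k(\hat{z}_k), \hat{z}_k - z\rangle + R_k,
\end{align*}
valid for every $z\in\mathcal{Z}$, by applying non-expansiveness of $\mathrm{Proj}_{\mathcal{Z}}$ to lines~\ref{line:8} and~\ref{line:14} of Algorithm~\ref{alg:ZOEG} with $G_k,\hat{F}_k,F_k$ in place of $G,\hat{F},F$. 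Taking conditional expectation over $u_k,\hat{u}_k$ converts $G_k$ into $\hat{F}_k$ (unbiasedness) and, via the Nesterov--Spokoiny bounds $E\|g_{\mu_y,k}\|^2\leq L_{0y}^2(m+4)^2$ and $\|\nabla_y f_{k,\mu}^L - g_{y,k}\|\leq \tfrac{1}{2}\mu L_{0y}m^{1/2}$ together with $\|g_{x,k}\|\leq L_0$ (from Assumption~\ref{assum:lipsch}, piecewise-affineness of $f_k^L$, and \citep[Thm.~24.7]{rockafellar2015convex}), reduces $R_k$ to the $2h_2(h_1+\tfrac{h_2}{2})(L_0^2+L_{0y}^2(m+4)^2)+2h_2\mu L_{0y}m^{1/2}$ terms already appearing in the offline proof.

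Next I would lower bound $\langle \hat{F}_k(\hat{z}_k),\hat{z}_k-z\rangle$ by $f_k^L(\hat{x}_k,y) - f_k^L(x,\hat{y}_k)$ using convex-concavity of $f_k^L$ (Lemma~\ref{lm:xylova}) and the (sub/super)gradient inequality. Setting $z=z_k^\ast$ produces the per-step contribution to $\text{RD-Gap}_N^L$, while $z=\bar{z}_k$ produces the per-step contribution to $\text{Dual-Gap}_N^L$. Summing telescopes the $\|z_k-z_k^\ast\|^2-\|z_{k+1}-z_k^\ast\|^2$ terms but leaves the comparator-mismatch sum $\sum_{k=0}^{N-1}\big(\|z_{k+1}-z_{k+1}^\ast\|^2-\|z_{k+1}-z_k^\ast\|^2\big)$. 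Expanding via the identity $\|a-b'\|^2-\|a-b\|^2 = \langle 2a-b-b',\,b-b'\rangle$ and using the diameter bound $\|z-z'\|\leq D_z=\sqrt{n+D_y^2}$ on $\mathcal{Z}$ controls this mismatch by $3D_z\sum_{k=1}^N\|z_k^\ast-z_{k-1}^\ast\|=3D_z P_N^\ast$, which is precisely the extra $\tfrac{3D_z P_N^\ast}{2h_2(N+1)}$ term in~\eqref{eq:eqon}. The duality-gap bound~\eqref{eq:eqon1} follows by repeating the same argument with $\bar{z}_k$ and $\bar{P}_N$.

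Plugging the stated choices of $\mu,h_1,h_2$ into~\eqref{eq:eqon1} makes every term on the right-hand side of order $O\!\big(\sqrt{\bar{P}_N/N}\big)$ or smaller, so $E[\text{Dual-Gap}_N^L]=O(\sqrt{N\bar{P}_N})$. To pass from $f_k^L$ back to $f_k$, I would invoke Lemma~\ref{lm:xylova} and the definition~\eqref{eq:Sk}: the identity $\max_y f_k^L(\hat{x}_k,y)=\max_y E_\tau[f_k(\hat{S}_k,y)]$ together with $\min_{x\in[0,1]^n}f_k^L(x,\hat{y}_k)\leq\min_{S\subset[n]}f_k(S,\hat{y}_k)$ (since every $\chi_S$ lies in $[0,1]^n$) yields $\text{Dual-Gap}_{\tau,N}\leq\text{Dual-Gap}_N^L$, which establishes~\eqref{eq:eqon2}.

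The main obstacle I anticipate is the path-length telescoping step: securing the sharp constant $3D_z$ requires handling the cross term $2\langle z_{k+1}-z_k^\ast,\,z_k^\ast-z_{k+1}^\ast\rangle$ more carefully than a naive Cauchy--Schwarz, and using the fact that both endpoints live in $\mathcal{Z}$ to invoke the diameter. A secondary subtlety is uniformity over the horizon: the Lipschitz and boundedness constants $L_{0x,k},L_{0y,k},M_k$ must be replaced by their suprema $L_0,L_{0y}$ over $k\in[N]$ so that the offline-style constants can be collected into a single bound that is valid at every iteration.
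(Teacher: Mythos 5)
Your proposal is correct and follows essentially the same route as the paper's proof: extragradient recursion via non-expansiveness of the projection, unbiasedness plus the Nesterov--Spokoiny second-moment bound, the convex-concave (sub/super)gradient inequality with the Gaussian-smoothing error $\mu L_{0y}m^{1/2}$, absorption of the comparator drift into a $3D_z$-weighted path length, and the final transfer to $f_k$ via Lemmas~\ref{lm:lmlova} and~\ref{lm:xylova}. The only (immaterial) difference is that you bound the comparator mismatch with a polarization identity, whereas the paper squares a triangle inequality $e_{k+1}\leq\|z_{k+1}-z_k^\ast\|+\|z_{k+1}^\ast-z_k^\ast\|$ and uses $\|z_{k+1}^\ast-z_k^\ast\|\leq D_z$; both yield the same $3D_z P_N^\ast$ term.
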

A proof of Theorem~\ref{th:on} can be found in Appendix~\ref{app:proof}.
The sequence $\bar{z}_k$, and hence $\bar{e}_0$ and $\bar{P}_N$, are random, as they depend on the iterates $\hat{z}_k$. This is why they appear inside the expectations in \eqref{eq:eqon1}. The deterministic bound $\bar{P}$ used to set the step size $h_2$ always exists, since the problem is constrained, while sharper problem-dependent bounds may be available.
Theorem~\ref{th:on} shows that in the expectation sense, the average of the (online) duality gap of $f^L$ and the average of the dual gap $\text{Dual-Gap}_{\tau,N}$ in the sequence generated by Algorithm~\ref{alg:ZOEG} converge to zero asymptotically, provided that the path-length bound $\bar{P}$ grows sublinearly in $N$.
In the next section, we will verify the theoretical findings through numerical examples.

\section{Numerical Example}\label{sec:exm}
In this section, we will illustrate the theoretical results given in Section~\ref{sec:main} through numerical examples. More examples and explanations are given in Appendix~\ref{app:exm}\footnote{All the relevant codes are publicly available at \url{https://github.com/amirali78frz/Minimax_projects.git}.}.  

\begin{table*}[t]
\centering
\caption{Comparison between the ZO online adversarial segmentation method and supervised and semi-supervised trained U-Net}
\label{tab:method_comparison}
\begin{tabular}{lccc}
\toprule
\textbf{Property} & \textbf{Algorithm~\ref{alg:ZOEG}} & \textbf{U-Net (supervised)} & \textbf{U-Net (semi-supervised)} \\
\midrule
Model type & Custom ZO optimiser & U-Net (CNN) & U-Net (CNN)  \\
Number of parameters & $\sim$ $2.5$k (Optimisation variables) & $\sim$1.06M (Net weights) & $\sim$1.06M (Net weights) \\
Training requirement & \textbf{No pre-training} & Supervised on GT & Semi-supervised on seeds \\
Supervision & \textbf{Semi-supervised} & Supervised & \textbf{Semi-supervised} \\
Input & Seeds + image & \textbf{Image only} & Seeds + image \\
Adversarial robustness & \textbf{Yes (robust by design)} & No (standard CNN) & No (standard CNN) \\
Real-time capability & $\sim$80 fps & $\sim$80 fps (CPU) & $\sim$80 fps (CPU) \\
Ground truth needed & \textbf{Seeds} & Exact GT & \textbf{Seeds} \\
Average IoU & \textbf{0.975} & 0.905 &  0.847\\
Average precision & \textbf{0.986} & 0.910 & 0.854 \\
Average recall & 0.989 & \textbf{0.994} &  0.991\\
Average f1 score & \textbf{0.987} & 0.950 &0.917\\
Memory footprint & \textbf{$\bold{\sim0.05}$MB (state variables)} & $\sim 8$MB & $\sim 8$MB\\
Theoretical Guarantees & \textbf{Yes} & No & No\\
\bottomrule
\end{tabular}
\end{table*}

\subsection{Offline Adversarial Image Segmentation}\label{sec:offex2}
In this section, we analyse the adversarial attack on semi-supervised image segmentation. The semi-supervised image segmentation through minimising a submodular cost function has been studied before, and its details can be found in Appendix~\ref{app:offex2}. Here, we extend the problem to the adversarial semi-supervised image segmentation, which can be formulated and solved through a min-max problem. We introduce the adversary vector $y\in\R^{|\mathcal{S}|},$ where $\mathcal{S}$ denotes the set of seed pixels (we reserve $S$ for the set variable of \eqref{mainp}), $y_s\in[0,1]$, and $\sum_{s\in\mathcal{S}}y_s\leq \rho.$ Vector $y$ can be viewed as trust weights of the given seeds, and $y_s=1$ means a fully trusted seed for correction, and $y_s=0$ means an ignored seed. Here, $\rho$ indicates the maximum number of seeds that can be trusted for correction. Thus, a lower $\rho$ means more uncertainty in the given seeds. The minimax problem cost function is defined in Appendix~\ref{app:offex2}. We solve this minimax problem using Algorithm~\ref{alg:ZOEG}. In our numerical experiments, we consider synthetic $50\times50$ noisy grayscale images consisting of two segments. A total of $20$ foreground seeds and $20$ background seeds are sampled uniformly at random from the corresponding regions. 
In Figure~\ref{fig:seg11} the results of segmentation and convergence of the Lovász extension of the objective function for $\rho=7$ is shown. The results demonstrate the recovered segmentation coincides with the ground truth. More explanation on the choice of $\rho$ and its effects is given in Appendix~\ref{app:offex2}. 
\begin{figure}[htb]
    \centering
    \includegraphics[width=1\linewidth]{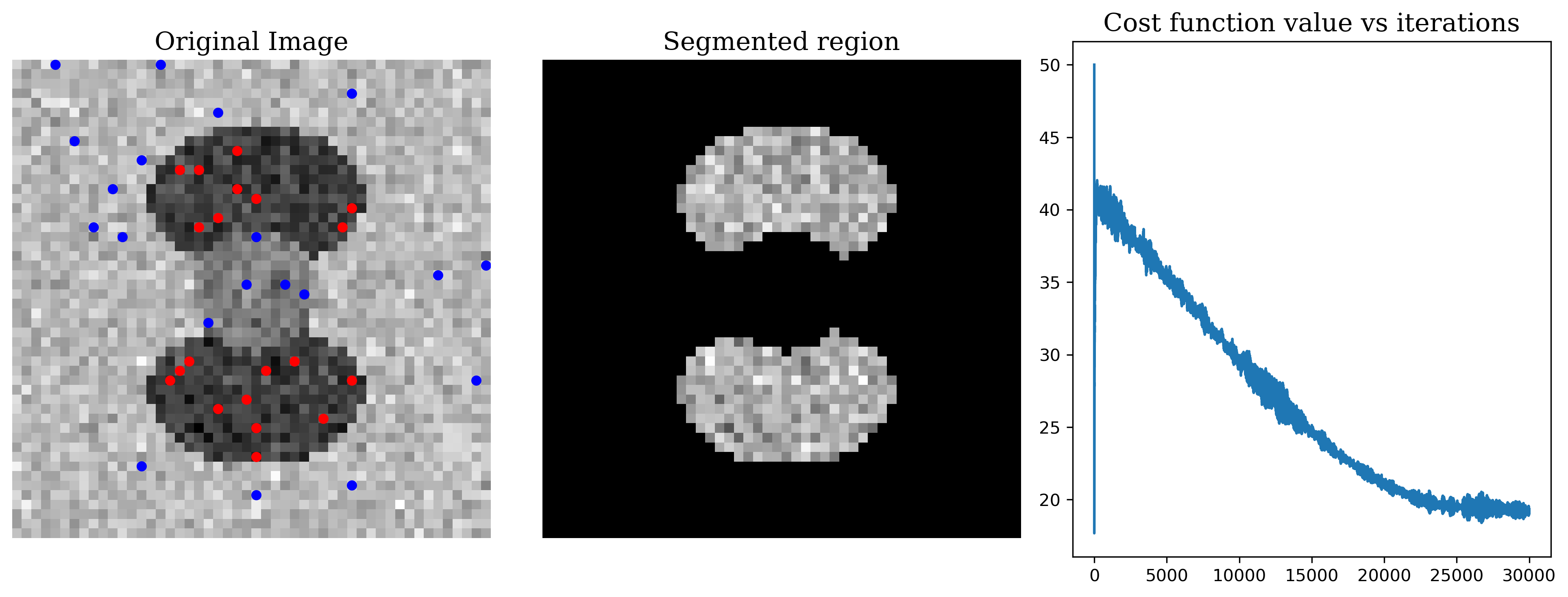}
    \caption{Offline Image Segmentation}
    \label{fig:seg11}
\end{figure}

\subsection{Online Adversarial Image Segmentation}\label{sec:onex2}
Here, we solve the online setting of the problem introduced in Section~\ref{sec:offex2}, where we have a video input where the shapes and clusters will move and rotate. Thus, edge weights, defined in Appendix~\ref{app:offex2}, change constantly, leading to an online problem. We solve the problem using Algorithm~\ref{alg:ZOEG}. In this case, we consider a synthetic noisy 3-minute 60 frames per second (fps) video as the input, where each frame is a $50\times50$ image with $50$ seeds for each cluster. 
The clusterings at times $t=60s$ and $t=120s,$ are given in Figure~\ref{fig:clusteron3}. 
\begin{figure}[htb]
    \centering
    \includegraphics[width=1\linewidth]{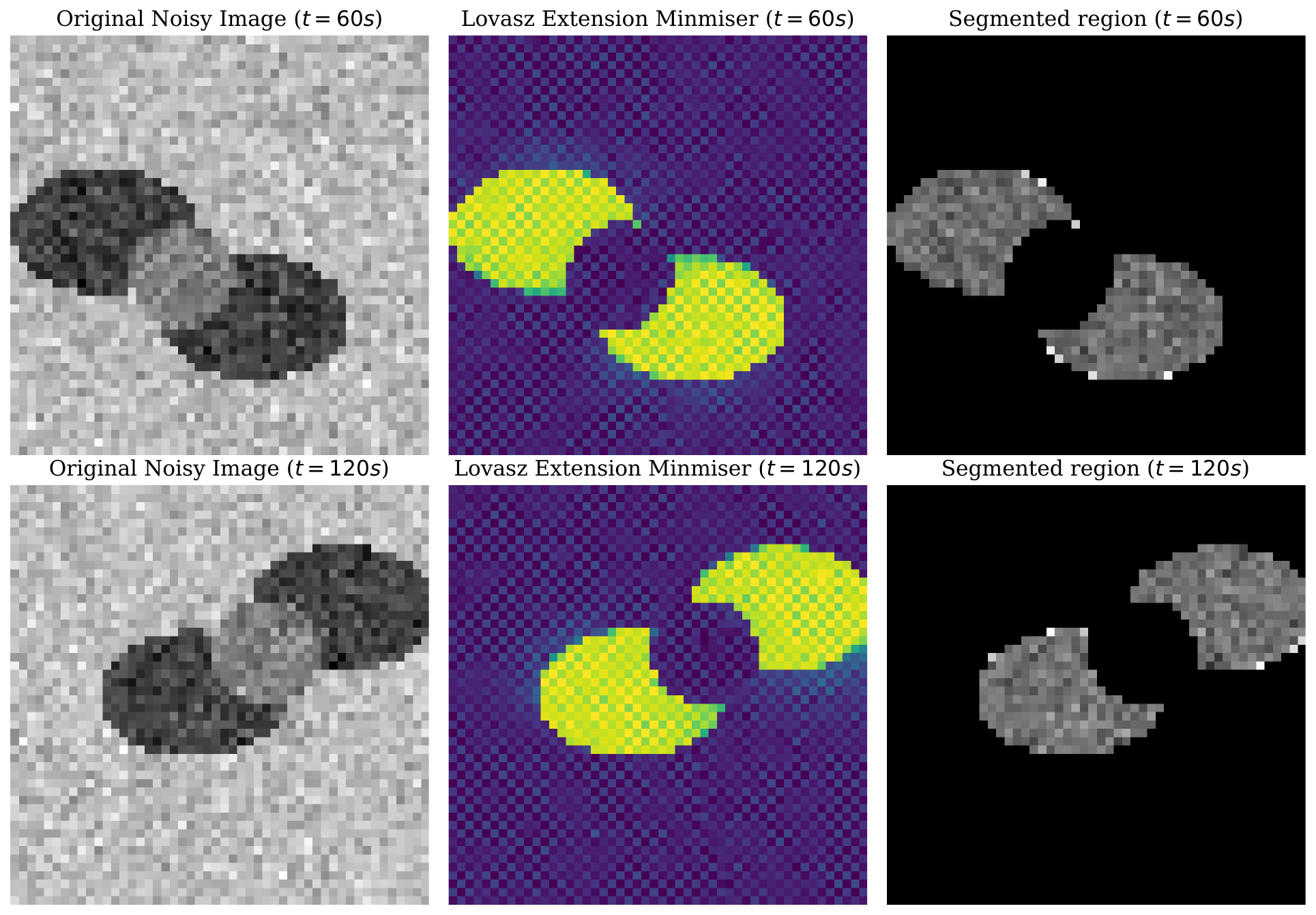}
    \caption{Online image segmentation under adversarial attack.}
    \label{fig:clusteron3}
\end{figure}

Algorithm~\ref{alg:ZOEG} successfully performs online image segmentation despite continuously changing edge weights and adversarial perturbations. Importantly, the algorithm operates in a fully online manner, performing a single extragradient update per incoming frame, and requires only the current state variables to be stored. 
We implemented a U-Net for supervised segmentation with $1{,}058{,}977$ parameters, trained on $3000$ samples of $50\times50$ images generated by random rotations and shifts of a base image, using ground-truth (GT) cluster labels. We also trained a semi-supervised U-Net with $1{,}059{,}841$ parameters and a 3-channel input (image plus cluster seeds) to match the input of Algorithm~\ref{alg:ZOEG}, but without an adversarial.  Both U-nets are trained and evaluated without adversaries.
All methods were evaluated on the same input video and performed clustering in real time. The average IoU (intersection over union or Jaccard index)~\cite{rota2017loss}, precision, recall, and F1-score~\cite{manning2008introduction} over all input frames are reported in Table~\ref{tab:method_comparison}. As shown, even with adversaries, Algorithm~\ref{alg:ZOEG} outperforms both supervised and semi-supervised U-Nets despite requiring no dataset or pre-training. Its independence from pre-training, real-time performance in unseen environments, and low memory footprint make it well-suited for embedded systems.

\section{Conclusions and Future Work}\label{sec:conc}
In this study, we considered min-max and max-min problems with possibly non-differentiable, submodular-concave cost functions in both offline and online settings. We considered an algorithm exploiting the extragradient framework. The algorithm uses the subgradient of the Lovász extension of the objective function with respect to the minimiser and uses a Gaussian smoothing random oracle to estimate the smoothed function gradient with respect to the maximiser to update the estimates. We investigated the framework's performance in solving this 
problem.
First, we considered the offline setting and showed that the algorithm, in the expectation sense, converges to an $\epsilon$-saddle point solution in $O(nm^2\epsilon^{-2})$ function calls. Then, we considered the online setting. We showed that, in the expectation sense, it achieves an online duality gap of $O(\sqrt{N(1+\bar{P})})$, where $\bar{P}$ upper-bounds the path length of the sequence of optimal decisions. Numerical examples demonstrating the theoretical results were presented in the end.
One possible future research topic is to explore the relations between solving \eqref{mainp2} and finding an equilibria that might have a mixed strategy Nash equilibrium interpretation for a game related to the minimax problem with a submodular-concave cost function.
A further possible direction is to investigate the use of a ZO algorithm in solving the minimax problem with submodular-submodular cost functions and to 
extend the results to weakly submodular functions. 
Another interesting direction is to extend the results for unbounded constraint sets.

\section*{Acknowledgements}
This work was supported by the Australian Research
Council through a Discovery Project under Grant DP250101763 and the United States Air Force Office of Scientific Research under Grants FA9550-23-1-0424 and FA2386-24-1-4014.

\section*{Impact Statement}

This paper presents work whose goal is to advance the field of Machine
Learning. There are many potential societal consequences of our work, none
which we feel must be specifically highlighted here.


\bibliography{mm_subm_conc}
\bibliographystyle{icml2026}

\newpage
\appendix
\onecolumn

\section{Complementary Materials on Submodular Functions}\label{app:sub}
In this section, we present complementary definitions and lemmas related to submodular functions and their Lovász extensions.
The following lemma summarises a set of well-known properties of the Lovász extension function shown in \cite{lovasz1983submodular,fujishige2005submodular,bach2013learning,ando2002k}.
\begin{lemma}\label{lm:lmlova}
    Let $\bar{f}:2^{[n]}\rightarrow \mathbb{R}$ be a submodular function and $\bar{f}^L$ be its Lovász extension. Then, we have that (i) $\bar{f}^L$ is convex, (ii) $\bar{f}^L$ is piecewise linear,  (iii) $\bar{f}^L$ is positively homogeneous (i.e., for each $c>0,$ we have $\bar{f}^L(cx)=c\bar{f}^L(x)$),  (iv) $\min_{S\subset[n]} \bar{f}(x) = \min_{x\in\{0,1\}^n} \bar{f}^L(x)=\min_{x\in[0,1]^n} \bar{f}^L(x)$, and (v) the set of minimisers of $\bar{f}^L(x)$ in $[0,1]^n$ is the convex hull of the set of minimisers of $\bar{f}^L(x)$ in $\{0,1\}^n$. 
\end{lemma}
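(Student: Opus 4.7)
\noindent\textbf{Proof proposal for Lemma~\ref{lm:lmlova}.}

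The plan is to exploit the explicit chain representation underlying Definition~\ref{def:lovae}. For $x\in[0,1]^n$, let $\pi$ be any permutation with $x_{\pi(1)}\geq x_{\pi(2)}\geq\cdots\geq x_{\pi(n)}$, set $B_0=\emptyset$ and $B_i=\{\pi(1),\dots,\pi(i)\}$, and write $x=\sum_{i=1}^{n}(x_{\pi(i)}-x_{\pi(i+1)})\chi_{B_i}$ with the convention $x_{\pi(n+1)}=0$. Collapsing indices with $x_{\pi(i)}=x_{\pi(i+1)}$ recovers the unique chain in Definition~\ref{def:lovae}, and plugging into \eqref{eq:eqle1} yields the closed form $\bar{f}^L(x)=\sum_{i=1}^n x_{\pi(i)}\bigl[\bar{f}(B_i)-\bar{f}(B_{i-1})\bigr]$. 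Claim (ii) is then immediate: the hypercube $[0,1]^n$ is partitioned (up to measure-zero boundaries) by the $n!$ closed cells $C_\pi=\{x:x_{\pi(1)}\ge\cdots\ge x_{\pi(n)}\}$, and on each cell $\bar{f}^L$ is the linear functional just displayed. For (iii), if $c>0$ and $cx\in[0,1]^n$, scaling preserves the coordinate ordering, so the same chain $\{A_i\}$ works for both with coefficients $\lambda_i$ replaced by $c\lambda_i$; the defining sum in \eqref{eq:eqle1} then scales by $c$.

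For (i), I would exhibit a subgradient at every $x$ and invoke the standard fact that a continuous function on a convex open set admitting a subgradient everywhere is convex. Specifically, fix $x$ with associated permutation $\pi$ and use the vector $g_x$ from Definition~\ref{def:subg}, namely $g_{\pi(i)}=\bar{f}(B_i)-\bar{f}(B_{i-1})$. Given any other $y\in[0,1]^n$ with its own permutation $\sigma$ and chain $\{\tilde B_i\}$, the subgradient inequality $\bar{f}^L(y)-\bar{f}^L(x)\geq\langle g_x,y-x\rangle$ reduces, after substituting the closed form above for both sides, to a sum of differences of the form $\bar{f}(\tilde B_i)-\bar{f}(\tilde B_i\cap B_j)-\bar{f}(\tilde B_i\cup B_j)+\bar{f}(B_j)\geq 0$, which is exactly Definition~\ref{def:sub}. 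This is the standard argument behind the greedy algorithm for submodular polyhedra, and I would carry it out by an Abel summation that rewrites $\langle g_x,y\rangle$ over the ordering of $y$ and applies the submodular inequality term-by-term. For (iv), note first that taking $p=0$, $A_0=S$, $\lambda_0=1$ in Definition~\ref{def:lovae} yields $\bar{f}^L(\chi_S)=\bar{f}(S)$, hence $\min_{x\in\{0,1\}^n}\bar{f}^L(x)=\min_{S\subset[n]}\bar{f}(S)$. The inclusion $\{0,1\}^n\subset[0,1]^n$ gives $\min_{[0,1]^n}\bar{f}^L\leq \min_{\{0,1\}^n}\bar{f}^L$, while Remark~\ref{rem:lovae} provides the converse: $\bar{f}^L(x)=\mathbb{E}_\tau[\bar{f}(S_\tau)]\geq\min_{S}\bar{f}(S)$ pointwise on $[0,1]^n$.

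Part (v) then follows cleanly from (i) and (iv). Let $V^\ast=\min_{S}\bar{f}(S)$, $M=\{S:\bar{f}(S)=V^\ast\}$, and $\mathcal{M}^\ast=\{x\in[0,1]^n:\bar{f}^L(x)=V^\ast\}$. For the inclusion $\mathrm{conv}\{\chi_S:S\in M\}\subset\mathcal{M}^\ast$, any convex combination $x=\sum\alpha_S\chi_S$ with $S\in M$ satisfies $\bar{f}^L(x)\leq\sum\alpha_S\bar{f}^L(\chi_S)=V^\ast$ by (i), and $\bar{f}^L(x)\geq V^\ast$ by (iv). For the reverse, take $x\in\mathcal{M}^\ast$; Remark~\ref{rem:lovae} gives $V^\ast=\int_0^1\bar{f}(S_\tau)\,d\tau$ with $\bar{f}(S_\tau)\geq V^\ast$, forcing $S_\tau\in M$ for almost every $\tau\in[0,1]$. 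The chain decomposition $x=\sum_{i=1}^n(x_{\pi(i)}-x_{\pi(i+1)})\chi_{B_i}$ uses precisely the level sets $B_i=S_\tau$ for $\tau\in(x_{\pi(i+1)},x_{\pi(i)})$, all of which lie in $M$, so $x\in\mathrm{conv}\{\chi_S:S\in M\}$. The main obstacle is the subgradient verification in (i): all other items are essentially bookkeeping on the chain decomposition, but the subgradient inequality is the place where submodularity genuinely enters, and care is needed to align the two chains $\{B_i\},\{\tilde B_i\}$ so that the Abel-summation identity reduces cleanly to instances of the Definition~\ref{def:sub} inequality.
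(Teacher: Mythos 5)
The paper does not actually prove this lemma: it is stated as a summary of well-known facts and discharged by citation to \cite{lovasz1983submodular,fujishige2005submodular,bach2013learning,ando2002k}, so there is no in-paper argument to compare against. Your reconstruction follows the classical route (chain/level-set decomposition, Edmonds' greedy vector as a subgradient, and the $E_\tau$ representation for the minimiser statements), and parts (ii), (iv) and the forward inclusion of (v) are correct and essentially complete. Two places, however, are not yet airtight. First, your closed form $\bar{f}^L(x)=\sum_{i=1}^n x_{\pi(i)}\bigl[\bar{f}(B_i)-\bar{f}(B_{i-1})\bigr]$ drops the constant $\bar{f}(\emptyset)$: carrying out the Abel summation on Definition~\ref{def:lovae} (which forces the slack weight $1-x_{\pi(1)}$ onto $A_0=\emptyset$ so that $\sum_i\lambda_i=1$) gives $\bar{f}^L(x)=\bar{f}(\emptyset)+\sum_{i=1}^n x_{\pi(i)}\bigl[\bar{f}(B_i)-\bar{f}(B_{i-1})\bigr]$. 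Consequently your argument for (iii) does not go through as stated: replacing $\lambda_i$ by $c\lambda_i$ destroys the normalisation $\sum_i\lambda_i=1$, and one finds $\bar{f}^L(cx)=c\,\bar{f}^L(x)+(1-c)\bar{f}(\emptyset)$, so positive homogeneity holds only under the normalisation $\bar{f}(\emptyset)=0$. This is really a gap in the lemma statement inherited from the cited literature (where that normalisation is standard), but your proof should make the hypothesis explicit rather than silently assume it; the same constant also enters the identity $\langle g_x,x\rangle=\bar{f}^L(x)$ that your subgradient inequality relies on.

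Second, the crux of (i) --- that the greedy vector $g_x$ satisfies $\langle g_x,y\rangle\le\bar{f}^L(y)-\bar{f}(\emptyset)$ for all $y\in[0,1]^n$, equivalently that $g_x$ lies in the base polytope --- is only described as a plan (``an Abel summation \ldots applies the submodular inequality term-by-term''). This is exactly the content of Lov\'asz's theorem and is where submodularity is genuinely used, so as written the proposal identifies the right argument but does not yet supply it; the standard way to close it is to prove $\sum_{j\in T}g_{x,j}\le\bar{f}(T)-\bar{f}(\emptyset)$ for every $T\subset[n]$ by induction along the chain $\{B_i\}$, using the diminishing-returns form of Definition~\ref{def:sub}, and then to conclude via $\bar{f}^L(y)-\bar{f}(\emptyset)=\max\{\langle w,y\rangle: w\in B(\bar{f})\}$. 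Finally, in the reverse inclusion of (v), the decomposition $x=\sum_{i=1}^n(x_{\pi(i)}-x_{\pi(i+1)})\chi_{B_i}$ has weights summing to $x_{\pi(1)}$, not to $1$; you must include the term $(1-x_{\pi(1)})\chi_{\emptyset}$ and observe that your almost-every-$\tau$ argument already forces $\emptyset\in M$ whenever $x_{\pi(1)}<1$, after which the combination is genuinely convex and the conclusion follows.
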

Moreover, the subgradient of the Lovász extension of a submodular function satisfies the following property.
\begin{lemma}[\cite{hazan2012online}, Lem~8]\label{lem:gxb}
For all $x\in [0,1]^n$, the
subgradients $g$ of the Lovász extension $f^L : [0,1]^n \to [-M, M]$, $M\in \R_{\geq 0}$, of a submodular function are bounded by
\[
\|g\| \leq 4M.
\]
\end{lemma}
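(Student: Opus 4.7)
\textbf{Proof plan for Lemma \ref{lem:gxb}.}
The plan is to bound $\|g\|_2$ by the stronger quantity $\|g\|_1$ and control the latter by splitting into positive and negative components. Starting from Definition \ref{def:subg}, the coordinates of $g$ are telescoping differences
\[
  g_i = f(B_{\pi(i)}) - f(B_{\pi(i)-1})
\]
along the maximal chain $\emptyset = B_0 \subset B_1 \subset \cdots \subset B_n = [n]$ associated with $x$. Define
\[
  P = \{i : g_i > 0\}, \qquad N = \{i : g_i < 0\},
\]
so that $\|g\|_1 = \sum_{i\in P} g_i + \sum_{i\in N} |g_i|$. The two sums will be bounded separately by $2M$ each, yielding $\|g\|_1 \leq 4M$ and hence $\|g\|_2 \leq \|g\|_1 \leq 4M$.

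For the positive part, I would list the elements of $P$ in chain order, $e_{j_1}, \dots, e_{j_k}$ with $j_1 < \cdots < j_k$, and set $T^+_\ell = \{e_{j_1},\dots,e_{j_\ell}\}$. Since $T^+_{\ell-1} \subseteq B_{j_\ell - 1}$, submodularity (the diminishing-returns form of Definition \ref{def:sub}) gives
\[
  f(T^+_\ell) - f(T^+_{\ell-1}) \;\geq\; f(B_{j_\ell}) - f(B_{j_\ell - 1}) \;=\; g_{e_{j_\ell}}.
\]
Summing and telescoping on the left yields $f(T^+_k) - f(\emptyset) \geq \sum_{i\in P} g_i$, and since $|f^L|\leq M$ on $[0,1]^n$ (hence $|f|\leq M$ on $\{0,1\}^n$), the left-hand side is at most $2M$.

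For the negative part the bookkeeping is the main subtlety, and is the step I expect to be the hardest to set up correctly: I need the set to which a ``bad'' element is being added in the comparison to contain the corresponding chain prefix, so that submodularity points the right way. The trick is to \emph{remove} elements of $N$ from $[n]$ in \emph{reverse} chain order. Listing the elements of $N$ as $e_{j'_1},\dots,e_{j'_m}$ with $j'_1 < \cdots < j'_m$, set $C_0 = [n]$ and $C_\ell = C_{\ell-1}\setminus\{e_{j'_{m-\ell+1}}\}$. Because only elements with chain indices strictly larger than $j'_{m-\ell+1}$ have been removed by stage $\ell$, one checks $B_{j'_{m-\ell+1}-1} \subseteq C_\ell$, and submodularity (in the form of Remark \ref{rem:lovae}) then gives
\[
 f(C_{\ell-1}) - f(C_\ell) \;\leq\; f(B_{j'_{m-\ell+1}}) - f(B_{j'_{m-\ell+1}-1}) \;=\; g_{e_{j'_{m-\ell+1}}} \;<\; 0.
\]
Rearranging to $f(C_\ell) - f(C_{\ell-1}) \geq |g_{e_{j'_{m-\ell+1}}}|$ and telescoping yields $\sum_{i\in N}|g_i| \leq f(C_m) - f(C_0) \leq 2M$.

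Combining the two bounds gives $\|g\|_1 \leq 4M$, which implies $\|g\|_2 \leq 4M$ by the elementary inequality $\|g\|_2 \leq \|g\|_1$, completing the proof.
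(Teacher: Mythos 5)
Your proof is correct. Note that the paper itself does not prove this lemma: it is stated with a citation to Hazan and Kale (2012, Lemma~8), so there is no in-paper argument to compare against. Your argument is the standard one behind that cited result -- bounding $\|g\|_2$ by $\|g\|_1$, splitting into positive and negative coordinates, and using the greedy/chain structure of the subgradient together with diminishing returns to show that each part telescopes to a difference of two function values, hence is at most $2M$. Both the positive-part induction (adding elements of $P$ in chain order, using $T^+_{\ell-1}\subseteq B_{j_\ell-1}$) and the negative-part induction (removing elements of $N$ from $[n]$ in reverse chain order, using $B_{j'_{m-\ell+1}-1}\subseteq C_\ell$) check out; this is exactly the argument that the greedy vertex lies in the base polytope, specialised to give the $4M$ bound. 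Two cosmetic slips: the inequality in the negative part is an instance of the diminishing-returns property in Definition~\ref{def:sub}, not of Remark~\ref{rem:lovae} (which concerns the threshold representation of the Lov\'asz extension); and at stage $\ell$ the removed elements have chain indices \emph{at least} $j'_{m-\ell+1}$ rather than strictly larger, though the inclusion $B_{j'_{m-\ell+1}-1}\subseteq C_\ell$ still holds since $e_{j'_{m-\ell+1}}\notin B_{j'_{m-\ell+1}-1}$. Neither affects correctness.
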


\section{Examples of Submodluar-Concave Functions with and without Saddle Points}\label{app:example}
In the following, we present two simple examples for the case when $\underset{y \in \mathcal{Y}}{\max}\underset{S \subset [n] }{\min}\;f(S,y)<\underset{S \subset [n] }{\min}\underset{y \in \mathcal{Y}}{\max}\;f(S,y)$, i.e., 
$f(S,y)$ has no saddle point and for the case that $\underset{y \in \mathcal{Y}}{\max}\underset{S \subset [n] }{\min}\;f(S,y)=\underset{S \subset [n] }{\min}\underset{y \in \mathcal{Y}}{\max}\;f(S,y)$, i.e.,
$f(S,y)$ has a saddle point. 
\begin{example}\label{ex:no_saddle}
Let $f:[1]\times [0,1] \rightarrow \R$ be defined as 
\begin{align*}
    f(\emptyset,y) = 1-y, \qquad f(\{1\},y)=y.
\end{align*}
Using the second inequality in Definition~\ref{def:sub}, one can verify that 
$f(S,y)$ is submodular with respect to $S.$ Moreover, $f(S,y)$ is concave with respect to $y.$  
Through direct calculation, we observe that
\begin{align*}
    \underset{S \subset [1] }{\min}\underset{y \in [0,1]}{\max}\;f(S,y) = 1
\end{align*}
and 
\begin{align*}
   \underset{y \in [0,1]}{\max} \underset{S \subset [1]}{\min}\;f(S,y) =  \underset{y \in [0,1]}{\max} \min\{y,1-y\} = \tfrac{1}{2}.
\end{align*}
Thus,
$\underset{y \in [0,1]}{\max}\underset{S \subset [1] }{\min}\;f(S,y)<\underset{S \subset [1] }{\min}\underset{y \in \in [0,1]}{\max}\;f(S,y)$
 and $f(S,y)$ has no saddle point. Now, considering \cite{bach2013learning} Def 3.1 and calculating the Lovász extension of $f(S,y)$ with respect to $S,$ we get 
 \begin{align*}
     f^L(x,y) = 2xy-x-y+1.
 \end{align*}
 Moreover, we can see that 
 \begin{align*}
     \underset{x \in [0,1]}{\min}\underset{y \in [0,1]}{\max}\;f^L(x,y)=\underset{y \in [0,1]}{\max}\underset{x \in [0,1]}{\min}\;f^L(x,y) = \tfrac{1}{2},
 \end{align*}
which is attained at $x^\ast=y^\ast=\frac{1}{2}.$
\end{example}
\begin{remark}
Even though the function discussed in Example \ref{ex:no_saddle} does not have a saddle point, the function admits a generalised saddle-point analogue to the so-called mixed strategy Nash equilibrium \cite{osborne1994course}, Def.~32.3. To see this, let
$p=\text{Pr}(S=\{1\}).$ Then
\begin{align*}
    E_p[f(S,y)] &= pf(\{1\},y) + (1-p)f(\emptyset,y)\\
    &=py+(1-p)(1-y)\\
    &=2py-p-y+1.
\end{align*}
Thus,
\begin{align*}
     \underset{p \in [0,1]}{\min}\underset{y \in [0,1]}{\max}\;E_p[f(S,y)]=\underset{y \in [0,1]}{\max}\underset{p \in [0,1]}{\min}\;E_p[f(S,y)] = \tfrac{1}{2},
 \end{align*}
which is attained at $p^\ast=y^\ast=\frac{1}{2}.$
Studying the relation between solutions of 
\eqref{mainp2} and finding mixed strategy Nash equilibria of games with submodular-concave cost functions will be the focus of future work.
\end{remark}
\begin{example}
To give an example of a submodular-concave function having a saddle point, consider $f:[1]\times [0,1]\rightarrow \R$ defined through 
\begin{align*}
           f(\emptyset,y) =0.4-(y-0.5)^2 , \qquad        f(\{1\},y)=1.
\end{align*}
We can see that 
\begin{align}
    \underset{y \in [0,1]}{\max}\underset{S \subset [1] }{\min}\;f(S,y)=\underset{S \subset [1] }{\min}\underset{y \in [0,1]}{\max}\;f(S,y)=0.4,
\end{align}
which is attained at the saddle point $S^\ast =\emptyset$ and $y^\ast=0.5.$ Calculating the 
the Lovász extension of $f(S,y)$ with respect to $S,$ we get 
 \begin{align*}
     f^L(x,y) = 0.4+0.6x+(x-1)(y-0.5)^2
 \end{align*}
and 
  \begin{align*}
     \underset{x \in [0,1]}{\min}\underset{y \in [0,1]}{\max}\;f^L(x,y)=\underset{y \in [0,1]}{\max}\underset{x \in [0,1]}{\min}\;f^L(x,y) = 0.4,
 \end{align*}
which is attained at $x^\ast=0$ and $y^\ast=0.5$. This shows that in this example, as the minimiser of $f^L$ is on a vertex, 
we can solve \eqref{mainp2} instead of \eqref{mainp}. 
\end{example}

\section{Proof of Lemmas, Propositions, and Theorems}\label{app:proof}
In this section, the proofs of lemmas, propositions, and theorems presented in the main sections of this work are given.
First, the proof of Lemma~\ref{lm:xylova} is given.
\begin{proof}[Proof of Lemma~\ref{lm:xylova}]
     Since $f(\cdot,y)$ is submodular 
     for all $y\in \R^m$, we can define the 
     Lovász extension $f^L(\cdot,y)$
     through Definition~\ref{def:lovae}. From Lemma~\ref{lm:lmlova}, we know that $f^L(\cdot,y)$ 
     is convex for all $y\in \R^m$. 
    For any fixed $x\in[0,1]^n,$  consider $f^L(x,\cdot).$ Then, from \eqref{eq:eqle1}, we can see that $f^L(x,\cdot)$ is a convex combination of concave functions. Thus, $f^L$ is convex-concave. 

To prove the second statement, for $x \in [0,1]^n$,
order the components in decreasing order $x_{j_1} \ge \cdots \ge x_{j_n}$,
where $(j_1,\ldots,j_n)$ is a permutation.
Then, from Definition~\ref{def:lovae} applied with the chain $\emptyset= A_0\subset\{j_1\}\subset\{j_1,j_2\}\subset\cdots\subset[n]$ and the coefficients $\lambda_0=1-x_{j_1}$, $\lambda_k=x_{j_k}-x_{j_{k+1}}$ for $k\in[n-1]$, and $\lambda_n=x_{j_n}$ (which are non-negative and sum to one),
the
Lovász extension of $f(\cdot,y)$, $y\in \R^m$, can be written as
\begin{equation}
f^L(x,y) = f(\emptyset,y)(1-x_{j_1}) +  \sum_{k=1}^{n-1} f(\{j_1,\ldots,j_k\},y)(x_{j_k} - x_{j_{k+1}}) + f([n],y)x_{j_n}.
\label{eq:lovasz2}
\end{equation}
Note that \eqref{eq:lovasz2} reduces to \citep[Def.~3.1]{bach2013learning} when the set function is normalised, i.e., $f(\emptyset,y)=0$; the first term accounts for the general case, since here $f(\emptyset,y)$ may be nonzero and may depend on $y$.
 Now, sampling
a threshold $\tau\in[0,1]$ uniformly at random and letting $S_\tau=\{i\in [n]:x(i)>\tau\},$ we have
 \begin{align*}
         E_\tau[f(S_\tau,y)] &= \int_0^1 f(S_\tau,y)\mathrm{d}\tau\\&= f(\emptyset,y)(1-x_{j_1}) + \sum_{k=1}^{n-1} f(\{j_1,\ldots,j_k\},y)(x_{j_k} - x_{j_{k+1}}) + f([n],y)x_{j_n}\\
         &=f^L(x,y).
 \end{align*}
Here, the first equality follows from the standard definition of expectation with respect to a random variable sampled from a uniform distribution between $[0,1]$ \cite{dekking2005modern}, the second equality follows from direct calculation of the integral and the
 fact that for $\tau\in[x_{j_{k+1}} , x_{j_k}),$ $S_\tau=\{j_1,\ldots,j_k\}$, for $\tau\in[x_{j_1},1],$ $S_\tau=\emptyset$, and for $\tau\in[0,x_{j_n}),$ $S_\tau=[n]$, so that $f(S_\tau,y)$ is piecewise constant in $\tau$, and the third equality follows from \eqref{eq:lovasz2}.
 Thus for all $y\in \R^m$ we have $ E_\tau[f(S_\tau,y)]=f^L(x,y)$ and $\max_{y\in\mathcal{Y}}f^L(x,y) = \max_{y\in\mathcal{Y}}E_\tau[f(S_\tau,y)].$    
\end{proof}    
In the following, the proof of Proposition~\ref{prp:ngs} is given.
\begin{proof}[Proof of Proposition~\ref{prp:ngs}]
     Let $f:2^{[n]}\times\R^m\rightarrow \mathbb{R}$ be a submodular-concave function and $f^L$ be the Lovász extension of $f$ with respect to the first variable. Considering \eqref{mainp2}, the fact that $f^L(x,y)$ is convex-concave, the constraints define convex and compact sets, 
and Sion's theorem \cite{sion1958general},
\begin{align}
    \underset{x \in [0,1]^n}{\min}\underset{y \in \mathcal{Y}}{\max}\;f^L(x,y)=\underset{y \in \mathcal{Y}}{\max}\underset{x \in [0,1]^n}{\min}\;f^L(x,y)
\end{align}
holds, which guarantees that a saddle point for $f^L$ exists. Moreover, considering Lemma~\ref{lm:lmlova}, we know $\underset{x \in [0,1]^n}{\min}\;f^L(x,y) = \underset{S \subset [n]}{\min}\;f(S,y)$. 
Thus, we have
\begin{align}\label{eq:maxmin}
    &\underset{y \in \mathcal{Y}}{\max}\underset{x \in [0,1]^n}{\min}\;f^L(x,y)=\underset{y \in \mathcal{Y}}{\max}\underset{S \subset [n]}{\min}\;f(S,y),\notag\\&
   \underset{x \in [0,1]^n}{\min} \underset{y \in \mathcal{Y}}{\max}\;f^L(x,y)=\underset{y \in \mathcal{Y}}{\max}\underset{S \subset [n]}{\min}\;f(S,y).
\end{align}
Moreover, as $\{0,1\}^n\subset[0,1]^n,$ we know that
\begin{align}\label{eq:discon}
    \underset{x \in [0,1]^n}{\min}\underset{y \in \mathcal{Y}}{\max}\;f^L(x,y)\leq\underset{x \in \{0,1\}^n}{\min}\underset{y \in \mathcal{Y}}{\max}\;f^L(x,y),
\end{align}
and 
\begin{align}\label{eq:minmax}
    \underset{x \in \{0,1\}^n}{\min}\underset{y \in \mathcal{Y}}{\max}\;f^L(x,y) = \underset{S \subset [n] }{\min}\underset{y \in \mathcal{Y}}{\max}\;f(S,y),
\end{align}
as $f^L$ and $f$ coincide on vertices. Thus, in general, we have 
\begin{align}
\underset{y \in \mathcal{Y}}{\max}\underset{S \subset [n] }{\min}\;f(S,y)\leq\underset{S \subset [n]}{\min}\underset{y \in \mathcal{Y}}{\max}\;f(S,y). \label{eq:max_min_leq}
\end{align}
Thus, in general existence of a saddle point for $f$ is not guaranteed. We either need to assume the existence of a saddle point for $f$ or investigate conditions such that 
\begin{align}\label{eq:ver}
    \underset{x \in [0,1]^n}{\min}\underset{y \in \mathcal{Y}}{\max}\;f^L(x,y)=\underset{x \in \{0,1\}^n}{\min}\underset{y \in \mathcal{Y}}{\max}\;f^L(x,y)
\end{align}
is satisfied, as if \eqref{eq:ver} holds, then combining \eqref{eq:maxmin}, \eqref{eq:discon}, and \eqref{eq:minmax}, we get 
\[\underset{y \in \mathcal{Y}}{\max}\underset{S \subset [n] }{\min}\;f(S,y)=\underset{S \subset [n] }{\min}\underset{y \in \mathcal{Y}}{\max}\;f(S,y),\]
which guarantees the existence of saddle points for $f(S,y)$ and $\underset{S \subset [n] }{\min}\underset{y \in \mathcal{Y}}{\max}\;f(S,y) = \underset{x \in [0,1]^n}{\min}\underset{y \in \mathcal{Y}}{\max}\;f^L(x,y).$ 
\end{proof}

Next, the proof of Proposition~\ref{Prp:sadex} is given.
\begin{proof}[Proof of Proposition~\ref{Prp:sadex}]
    To prove i), considering Lemma~\ref{lm:lmlova}.v), if $\zeta$ is a Lovász extension of a submodular function, then \eqref{eq:ver} holds immediately. Moreover, if the set of minimiser of $\zeta(x)$ over $x\in[0,1]^n$ contains a vertex $x\in\{0,1\}^n,$ it is trivial to see that \eqref{eq:ver} holds.
    To prove the last statement, if for any $y\in\mathcal{Y},$ the set of minimiser of $f(\cdot ,y)$ share a common vertex $x\in\{0,1\}^n,$ then it is guaranteed that the set of minimiser of $\zeta(x)$ contains a vertex and, consequently, \eqref{eq:ver} holds.
\end{proof}
Next, we present the proof of Theorem~\ref{th:off}.
\begin{proof}[Proof of Theorem~\ref{th:off}]
    Let $r_k = \|z_k-z^\ast\|.$ Using the non-expansiveness property of the projection to convex sets, we have
    \begin{align}\label{eq:th1eq1}
        r_{k+1}^2=&\|\mathrm{Proj}_{\mathcal{Z}}(z_k - h_{2_k}G(\hat{z}_k))-z^\ast\|^2\notag\\\leq&\|z_k-z^\ast-h_2G(\hat{z}_k)\|^2\notag\\=&r_k^2-2h_2\langle G(\hat{z}_k),z_k-z^\ast\rangle+h_2^2\|G(\hat{z}_k)\|^2.
    \end{align}
    Taking the expectation of 
    \eqref{eq:th1eq1} with respect to $\hat{u}_k$, we have
    \begin{align}\label{eq:th1eq2}
    E_{\hat{u}_k}[r_{k+1}^2]\leq  r_k^2-2h_2\langle \hat{F}(\hat{z}_k),z_k-z^\ast\rangle+h_2^2E_{\hat{u}_k}[\|G(\hat{z}_k)\|^2]
    \end{align}
Moreover, considering the \cite{nesterov_random_2017} Thm.~4.1, Lemma~\ref{lem:gxb}, and the fact that
$f^L$ is Lipschitz, we have $\|g_x(\hat{z}_k)\|^2\leq L_0^2$ and $E_{\hat{u}_k}[\|g_{\mu_y}(\hat{z}_k)\|^2]\leq L_{0y}^2(m+4)^2.$ Thus
\begin{align}\label{eq:th1eq3}
    E_{\hat{u}_k}[\|G(\hat{z}_k)\|^2] &=  \|g_x(\hat{z}_k)\|^2+E_{\hat{u}_k}[\|g_{\mu_y}(\hat{z}_k)\|^2]\notag\\
    &\leq L_0^2 + L_{0y}^2(m+4)^2.
    \end{align}
Additionally, note that the identity
    \begin{align}\label{eq:th1eq4}
        \langle \hat{F}(\hat{z}_k),z_k-z^\ast\rangle = \langle \hat{F}(\hat{z}_k),\hat{z}_k-z^\ast\rangle+\langle \hat{F}(\hat{z}_k),z_k-\hat{z}_k\rangle.
    \end{align}
holds.
Now, let $s_k=\frac{1}{h_1}(z_k-\mathrm{Proj}_{\mathcal{Z}}(z_k - h_{1}G(z_k)))$, which allows us to 
write 
$$z_k-\hat{z}_k=h_1s_k.$$ 
Using again the non-expansiveness property of the projection to convex sets, we get $\|s_k\|\leq\|G(z_k)\|.$ 
Considering the above and substituting \eqref{eq:th1eq3} and \eqref{eq:th1eq4} in \eqref{eq:th1eq2}, yields
\begin{align}\label{eq:th1eq5}
    \langle \hat{F}(\hat{z}_k),\hat{z}_k-z^\ast\rangle &\leq  \frac{r_k^2-E_{\hat{u}_k}[r_{k+1}^2]}{2h_2}+\frac{h_2}{2}( L_0^2  +  L_{0y}^2(m+4)^2)-\langle \hat{F}(\hat{z}_k),z_k-\hat{z}_k\rangle\notag\\& \leq
    \frac{r_k^2-E_{\hat{u}_k}[r_{k+1}^2]}{2h_2}+\frac{h_2}{2}( L_0^2  +  L_{0y}^2(m+4)^2)-h_1\langle \hat{F}(\hat{z}_k),s_k\rangle\notag\\& \leq
    \frac{r_k^2-E_{\hat{u}_k}[r_{k+1}^2]}{2h_2}+\frac{h_2}{2}( L_0^2  +  L_{0y}^2(m+4)^2)+h_1\|\hat{F}(\hat{z}_k)\| \|G(z_k)\|
\end{align}
From
Jensen's inequality, we know that 
$E_{u_k}[\|G(z_k)\|]^2\leq E_{u_k}[\|G(z_k)\|^2]$ and hence 
$E_{u_k}[\|G(z_k)\|] \leq ( L_0^2+L_{0y}^2(m+4)^2)^{1/2}.$ Moreover, to obtain an upper-bound for $\|\hat{F}(\hat{z}_k)\|,$ 
Using the Jensen inequality and the fact that $\|\cdot\|^2$ is a convex function, we get 
\begin{align*}
    \|\hat{F}(\hat{z}_k)\|^2\leq\|E_{\hat{u}_k}[G(\hat{z}_k)]\|^2\leq E_{\hat{u}_k}[\|G(\hat{z}_k)\|^2]\leq( L_0^2+L_{0y}^2(m+4)^2)
\end{align*}
i.e., $\|\hat{F}(\hat{z}_k)\|\leq(L_0^2+L_{0y}^2(m+4)^2)^{1/2}.$ Considering this estimate
and taking the expectation of \eqref{eq:th1eq5} with respect to $u_k,$ we have
\begin{align}
\begin{split}
    E_{u_k}[\langle \hat{F}(\hat{z}_k),\hat{z}_k-z^\ast\rangle]&\leq  \frac{E_{u_k}[r_k^2]-E_{\hat{u}_k}[r_{k+1}^2]}{2h_2}+(\tfrac{h_2}{2}+h_1)( L_0^2 + L_{0y}^2(m+4)^2).
    \end{split}\label{eq:th1eq6}
    \end{align}
    In addition, from the convexity of $f^L(\cdot,y)$, 
    we have 
    \begin{align}\label{eq:th1eq7}
        f^L(x,y)-f^L(x^\ast,y)\leq \langle g_x(z),x-x^\ast\rangle,
    \end{align}
    and from the concavity of $f^L(x,\cdot)$
    and considering \cite{nesterov_random_2017} Thm~2, we have 
    \begin{align}\label{eq:th1eq8}
         f^L(x,y^\ast)-f^L(x,y)\leq \langle -\nabla_yf^L_{\mu}(z),y-y^\ast\rangle+\mu L_{0y}m^{1/2}.
    \end{align}
    Adding \eqref{eq:th1eq7} and \eqref{eq:th1eq8} together and letting $R^L(z) = f^L(x,y^\ast)-f^L(x^\ast,y),$ we have
    \begin{align}\label{eq:th1eq9}
        R^L(z) \leq\langle \hat{F}(z),z-z^\ast\rangle+\mu L_{0y}m^{1/2}
    \end{align}
    Substituting \eqref{eq:th1eq9} in \eqref{eq:th1eq6}, we get
    \begin{align}
    \begin{split}
        E_{u_k}[R^L(\hat{z}_k)]\leq& \frac{E_{u_k}[r_k^2]-E_{\hat{u}_k}[r_{k+1}^2]}{2h_2}+\mu L_{0y}m^{1/2} +(\tfrac{h_2}{2}+h_1)( L_0^2 + L_{0y}^2(m+4)^2).
    \end{split}\label{eq:th1eq10}
    \end{align}
    Summing from $k=0$ to $k=N,$ taking the expectation with respect to $\mathcal{U}_{k-1}$,  and dividing by $N+1,$ we arrive at \eqref{eq:eqoff}. 
    
    To derive \eqref{eq:eqoff1}, consider 
    the steps from \eqref{eq:th1eq1} to \eqref{eq:th1eq9} with an arbitrary $z\in\mathcal{Z}$ instead of the saddle point $z^\ast.$ Then, we 
    can derive the estimate
    \begin{align}
        \label{eq:th1eq12}
       f^L(\hat{x}_k,y) \hspace{-0.05cm}-\hspace{-0.05cm}f^L(x,\hat{y}_k) &\leq\langle \hat{F}(\hat{z}_k),\hat{z}_k-z\rangle+\mu L_{0y}m^{1/2}, \qquad \forall z=(x,y)\in\mathcal{Z},
    \end{align}
    which holds pointwise for every realisation of the randomness.
    Next, we bound $\sup_{z\in\mathcal{Z}}\sum_{k=0}^N\langle \hat{F}(\hat{z}_k),\hat{z}_k-z\rangle$ in expectation. Fix $z\in\mathcal{Z}$. Using the non-expansiveness of the projection, for every realisation,
    \begin{align}\label{eq:th1eq11}
        \langle G(\hat{z}_k),z_k-z\rangle \leq \frac{\|z_k-z\|^2-\|z_{k+1}-z\|^2}{2h_2}+\frac{h_2}{2}\|G(\hat{z}_k)\|^2,
    \end{align}
    and, since $z_k-\hat{z}_k=h_1s_k$ with $\|s_k\|\leq\|G(z_k)\|,$
    \begin{align}\label{eq:th1eq13}
        \langle G(\hat{z}_k),\hat{z}_k-z\rangle \leq \frac{\|z_k-z\|^2-\|z_{k+1}-z\|^2}{2h_2}+\frac{h_2}{2}\|G(\hat{z}_k)\|^2+h_1\|G(\hat{z}_k)\|\|G(z_k)\|.
    \end{align}
    Define the oracle noise $\Delta_k := G(\hat{z}_k)-\hat{F}(\hat{z}_k)$, which satisfies $E_{\hat{u}_k}[\Delta_k]=0$ and, conditionally on $\hat{z}_k$,
    \begin{align*}
        E_{\hat{u}_k}[\|\Delta_k\|^2]=E_{\hat{u}_k}[\|G(\hat{z}_k)\|^2]-\|\hat{F}(\hat{z}_k)\|^2\leq L_0^2+L_{0y}^2(m+4)^2.
    \end{align*}
    Following the auxiliary-sequence technique of \citet[Lem.~6.1, used there to prove Lem.~3.1]{nemirovski2009robust}, define $v_0=z_0$ and $v_{k+1}=\mathrm{Proj}_{\mathcal{Z}}(v_k+h_2\Delta_k).$ The same projection argument as in \eqref{eq:th1eq11} gives, for all $z\in\mathcal{Z},$
    \begin{align}\label{eq:th1eq14}
        \langle -\Delta_k,v_k-z\rangle \leq \frac{\|v_k-z\|^2-\|v_{k+1}-z\|^2}{2h_2}+\frac{h_2}{2}\|\Delta_k\|^2.
    \end{align}
    Writing $\langle \hat{F}(\hat{z}_k),\hat{z}_k-z\rangle=\langle G(\hat{z}_k),\hat{z}_k-z\rangle-\langle \Delta_k,\hat{z}_k-v_k\rangle+\langle -\Delta_k,v_k-z\rangle,$ summing \eqref{eq:th1eq13} and \eqref{eq:th1eq14} from $k=0$ to $k=N,$ and taking the supremum over $z\in\mathcal{Z}$ \emph{after} the summation (both telescoping sums are bounded by $\bar{r}_0^2$ uniformly in $z$), we obtain
    \begin{align}\label{eq:th1eq15}
        \sup_{z\in\mathcal{Z}}\sum_{k=0}^N\langle \hat{F}(\hat{z}_k),\hat{z}_k-z\rangle \leq \frac{\bar{r}_0^2}{h_2}+\sum_{k=0}^N\Big[\frac{h_2}{2}\|G(\hat{z}_k)\|^2+h_1\|G(\hat{z}_k)\|\|G(z_k)\|+\frac{h_2}{2}\|\Delta_k\|^2\Big]-\sum_{k=0}^N\langle \Delta_k,\hat{z}_k-v_k\rangle.
    \end{align}
    Both $\hat{z}_k$ and $v_k$ are measurable with respect to $\sigma(\mathcal{U}_{k-1},u_k)$, while $E[\Delta_k\,|\,\mathcal{U}_{k-1},u_k]=0$; hence $E[\langle \Delta_k,\hat{z}_k-v_k\rangle]=0.$ Moreover, by the Cauchy--Schwarz inequality (applied twice, to the inner product and to the expectation of the product), $E[\|G(\hat{z}_k)\|\|G(z_k)\|]\leq (E[\|G(\hat{z}_k)\|^2])^{1/2}(E[\|G(z_k)\|^2])^{1/2}\leq L_0^2+L_{0y}^2(m+4)^2.$ Taking the expectation of \eqref{eq:th1eq15} therefore yields
    \begin{align}\label{eq:th1eq16}
        E_{\mathcal{U}_N}\Big[\sup_{z\in\mathcal{Z}}\sum_{k=0}^N\langle \hat{F}(\hat{z}_k),\hat{z}_k-z\rangle\Big] \leq \frac{\bar{r}_0^2}{h_2}+(N+1)(h_2+h_1)(L_0^2+L_{0y}^2(m+4)^2).
    \end{align}
    On the other hand, summing \eqref{eq:th1eq12} from $k=0$ to $k=N$ and using the convexity of $f^L(\cdot,y)$, the concavity of $f^L(x,\cdot)$, and Jensen's inequality, for every $z=(x,y)\in\mathcal{Z}$ it holds that
    \begin{align}\label{eq:th1eq17}
        (N+1)\big[f^L(x^{\mathrm{av}}_N,y)-f^L(x,y^{\mathrm{av}}_N)\big] \leq \sum_{k=0}^N\big[f^L(\hat{x}_k,y)-f^L(x,\hat{y}_k)\big] \leq \sum_{k=0}^N\langle \hat{F}(\hat{z}_k),\hat{z}_k-z\rangle+(N+1)\mu L_{0y}m^{1/2}.
    \end{align}
    Taking the supremum over $z\in\mathcal{Z}$ on both sides of \eqref{eq:th1eq17} (the supremum of the left-hand side is $(N+1)D^L(z^{\mathrm{av}}_N)$), dividing by $N+1,$ taking the expectation, and substituting \eqref{eq:th1eq16}, we arrive at \eqref{eq:eqoff1}.
    Considering the chosen values for the hyperparameters and the upper-bound given in \eqref{eq:eqoff1}, we get
    \[
    E_{\mathcal{U}_N}[D^L(z^{\mathrm{av}}_N)]\leq\epsilon.
    \]

    Considering Lemma~\ref{lm:lmlova}, we know $\min_{S \subset [n]}f(S,y) = \min_{x\in[0,1]^n}f^L(x,y)$. From
    the projection step in Algorithm~\ref{alg:ZOEG}, we know that $\hat{x}_k\in[0,1]^n$ for all $k,$ and hence $x^{\mathrm{av}}_N\in[0,1]^n$ by convexity of the hypercube. Thus, considering Lemma~\ref{lm:xylova}, we have $f^L(x^{\mathrm{av}}_N,y) = E_\tau[f(\bar{S}_N,y)]$ with $\bar{S}_N=\{i\in[n]:x^{\mathrm{av}}_N(i)>\tau\}.$ Moreover, $\tau$ and $\mathcal{U}_N$ are independent random variables. Thus, with $D_\tau(S_k,y_k) = \max_{y\in\mathcal{Y}}E_\tau[f(S_k,y)]-\min_{S \subset [n]}f(S,y_k)$, it follows that $D_\tau(\bar{S}_N,\bar{y}_N)=D^L(z^{\mathrm{av}}_N),$ and \eqref{eq:eqoff2} follows.
\end{proof}
At last, the proof of Theorem~\ref{th:on} is given below.
\begin{proof}[Proof of Theorem~\ref{th:on}]
    Let $e_k = \|z_k-z_k^\ast\|.$ Then, we have
    \begin{align*}
    e_{k+1}=&\|z_{k+1}-z_{k+1}^\ast + z_k^\ast - z_k^\ast \|\\
    \leq& \|z_{k+1}-z_k^\ast\| + \|z_{k+1}^\ast-z_{k}^\ast\|    
    \end{align*} 
    and with
    $\|z_{k+1}-z_k^\ast\|\leq D_z,$ where $D_z=\sqrt{n+D_y^2},$ 
    it holds that
    \begin{align}\label{eq:th_on1}
        e_{k+1}^2 \leq \|z_{k+1}-z_k^\ast\|^2 + \|z_{k+1}^\ast-z_{k}^\ast\|^2 +  2D_z\|z_{k+1}^\ast-z_{k}^\ast\|.
    \end{align}
    
    Using the non-expansiveness property of projections on
    convex sets, we have
    \begin{align}\label{eq:th_on2}
        \|z_{k+1}-z_k^\ast\|^2\leq&\|\mathrm{Proj}_{\mathcal{Z}}(z_k - h_{2_k}G_{k}(\hat{z}_k))-z_k^\ast\|^2\notag\\
        \leq&\|z_k-z_k^\ast-h_2G_{k}(\hat{z}_k)\|^2 \\
        \leq&e_k^2 - 2h_2\langle G_{k}(\hat{z}_k),z_k-z_k^\ast\rangle + h_2^2\|G_{k}(\hat{z}_k)\|^2. \notag
    \end{align}
    Combining \eqref{eq:th_on1} and \eqref{eq:th_on2}, we get
    \begin{align}
    \begin{split}
        e_{k+1}^2\leq & e_k^2-2h_2\langle G_{k}(\hat{z}_k),z_k-z_k^\ast\rangle+h_2^2\|G_{k}(\hat{z}_k)\|^2
        +3D_z\|z_{k+1}^\ast-z_{k}^\ast\|.
        \end{split} \label{eq:th_on3}
    \end{align}
    Taking the expectation with respect to $\hat{u}_k$ and similar to the steps 
    \eqref{eq:th1eq3} to  \eqref{eq:th1eq6} in proof of Theorem~\ref{th:off}, we get 
    \begin{align}\label{eq:th_on4}
    E_{u_k}&[\langle \hat{F}_k(\hat{z}_k),\hat{z}_k-z_k^\ast\rangle]\leq \frac{E_{u_k}[e_k^2]-E_{\hat{u}_k}[e_{k+1}^2]}{2h_2} +(\tfrac{h_2}{2}+h_1)( L_0^2 + L_{0y}^2(m+4)^2)+\frac{3D_z}{2h_2}\|z_{k+1}^\ast-z_{k}^\ast\|. 
    \end{align}
    Similar to \eqref{eq:th1eq9}, we get 
    \begin{align}\label{eq:th_on5}
       f_k^L(\hat{x}_k,y^\ast_k)-f_k^L(x^\ast_k,\hat{y}_k) \leq\langle \hat{F}_k(\hat{z}_k),\hat{z}_k-z_k^\ast\rangle+\mu L_{0y}m^{1/2}
    \end{align}
    Now, Combining \eqref{eq:th_on4} and \eqref{eq:th_on5}, summing from $k=0$ to $k=N,$ taking the total expectation with respect to $\mathcal{U}_{N}$,  and dividing by $N+1,$ and letting $P_N^\ast=P_N(z^\ast_0,\dots,z^\ast_N)$ and $\text{RD-Gap}^L_N =\sum_{k=0}^N f^L_k(\hat{x}_k,y^\ast_k) - f^L_k(x^\ast_k,\hat{y}_k),$ we arrive at \eqref{eq:eqon}. 
    
    To derive \eqref{eq:eqon1}, we repeat 
    the steps from \eqref{eq:th_on1} to \eqref{eq:th_on5} with $\bar{z}_k =(\arg\min_{x\in [0,1]^n} f_k^L(x,\hat{y_k}),\arg\max_{y\in \mathcal{Y}} f_k^L(\hat{x}_k,y))$ instead of $z^\ast_k$ and defining $\bar{e}_k=\|z_k-\bar{z}_k\|,$ we get
    \begin{align}\label{eq:th_on6}
    E_{u_k}&[\langle \hat{F}_k(\hat{z}_k),\hat{z}_k-\bar{z}_k\rangle]\leq \frac{E_{u_k}[\bar{e}_k^2]-E_{\hat{u}_k}[\bar{e}_{k+1}^2]}{2h_2} +(\tfrac{h_2}{2}+h_1)( L_0^2 + L_{0y}^2(m+4)^2)+\frac{3D_z}{2h_2}\|\bar{z}_{k+1}-\bar
    z_{k}\|. 
    \end{align}
    and 
    \begin{align}\label{eq:th_on7}
       f_k^L(\hat{x}_k,\bar{y}_k) - f_k^L(\bar{x}_k,\hat{y}_k)&=\max_{y\in\mathcal{Y}}f^L_k(\hat{x}_k,y)  -    \min_{x \in [0,1]^n} f^L_k(x,\hat{y}_k) \notag\\&\leq\langle \hat{F}_k(\hat{z}_k),\hat{z}_k-\bar{z}_k\rangle+\mu L_{0y}m^{1/2}
    \end{align}
    Now, combining \eqref{eq:th_on6} and \eqref{eq:th_on7}, summing from $k=0$ to $k=N,$ taking the total expectation with respect to $\mathcal{U}_{N}$ (note that $\bar{z}_k$ is a deterministic function of $\hat{z}_k$, so the conditional-expectation steps remain valid, while $\bar{e}_0$ and $\bar{P}_N$ are random and appear in expectation), 
    dividing by $N+1,$ letting $\bar{P}_N=P_N(\bar{z}_0,\dots,\bar{z}_N)$ and $\text{Dual-Gap}^L_N =\sum_{k=0}^N \max_{y\in\mathcal{Y}}f^L_k(\hat{x}_k,y) - \min_{x \in [0,1]^n} f^L_k(x,\hat{y}_k),$ we arrive at \eqref{eq:eqon1}.  Considering the chosen values for the hyperparameters, together with $\bar{e}_0\leq D_z$ and $E_{\mathcal{U}_N}[\bar{P}_N]\leq\bar{P}$, and the upper-bound given in \eqref{eq:eqon1}, we get
    \begin{align}
        &E_{\mathcal{U}_{N}}[\text{Dual-Gap}^L_{N}]\leq
         (N+1)^\frac{1}{2}(1+( L_0^2+L_{0y}^2(m+4)^2)^\frac{1}{2}(1+(D_z^2+3D_z\bar{P})^\frac{1}{2}))
    \end{align}
    or
    \[ E_{\mathcal{U}_{N}}[\text{Dual-Gap}^L_{N}]\leq O(\sqrt{N(1+\bar{P})}).\]
    Considering Lemma~\ref{lm:lmlova}, we know that $\min_{S \subset [n]}f_k(S,y) = \min_{x\in[0,1]^n}f^L_k(x,y)$. From
    the projection step in Algorithm~\ref{alg:ZOEG}, we get
    ${\hat{x}_k}\in[0,1]^n$ for all $k \in [N].$ Thus, considering Lemma~\ref{lm:xylova}, we have $f_k^L({\hat{x}_k},y) = E_\tau[f_k({\hat{S}_k},y)].$ Moreover, we know that $\tau$ and $u_k$ are independent random variables.
    Thus, considering above and letting $\text{Dual-Gap}_{\tau,N} = \sum_{k=0}^N \max_{y\in\mathcal{Y}}E_\tau[f_k(\hat{S}_k,y)] - \min_{S \subset [n]} f_k(S,\hat{y}_k)$, we get \eqref{eq:eqon2},
    which completes the proof.
\end{proof}

\section{Complementary Remarks on Results Given in Section~\ref{sec:main}}\label{app:rem}
In this section, we provide some complementary explanation on the discussions and results given in Section~\ref{sec:main}. The next remark will explain the structure of Algorithm~\ref{alg:ZOEG}.
\begin{remark}
For constructing a ZO algorithm, we could 
employ the random oracle in~\eqref{eq:eq23} with respect to $x$ and $y$ and not only with respect to $y$ as in Algorithm~\ref{alg:ZOEG}.
However, in
~\cite{farzin2025minimisationsubmodular}, the authors study the use of Gaussian random oracles for minimising a submodular function and show that, while the ZO method achieves similar performance to the subgradient method in minimising a submodular function and in expectation, the latter is at least twice as fast. This difference arises because computing a subgradient requires $n$ function evaluations, whereas computing the random oracle requires at least (with $t_k=1$) $2n$ evaluations (calculating the Lovász extension requires $n$ function evaluations). 
This justifies why the random oracle is only applied to the $y$-component but not to the $x$-component in the algorithm developed in this paper.
\end{remark}
The next remark gives more explanation about the choice of the oracle \eqref{eq:eq23}.
 \begin{remark}
   If, instead of using \eqref{eq:eq23}, we employ either the central difference or the backward difference random oracle, defined respectively as
\(
\hat{g}_\mu(y) = \frac{\tilde{f}(y+\mu u) - \tilde{f}(y-\mu u)}{2\mu}u,\) or \( 
\bar{g}_\mu(y) = \frac{\tilde{f}(y) - \tilde{f}(y-\mu u)}{\mu}u,
\)
we can still derive the same bounds as those in Theorem~\ref{th:off}. This follows from the fact that, for a Lipschitz continuous function, the expected squared norm of each of these oracles admits the same upper bound.
 \end{remark}

In section~\ref{sec:mainon}, we assume that we have enough time for at least four function evaluations between each function change from $f_k$ to $f_{k+1},$ for example, between each input frame in Section~\ref{sec:onex2}. Below, we explain how we can relax this assumption to have enough time for at least three function evaluations between each function change and how the results given in Section~\ref{sec:mainon} will change.

Here, as considered in \cite{shames2019online,farzin2025minimisationsubmodular}, we assume that the cost function can change between the function evaluations that are needed in each iteration of Algorithm~\ref{alg:ZOEG}. Thus we consider working with a family of submodular-concave functions $f_k: 2^{[n]}\times\R^m \rightarrow \mathbb{R}$ where $k\in\mathbb{N}\cup\{j+\frac{1}{2}|j\in\mathbb{N}\} = \{0,1/2,1,3/2,\dots\}.$  For simplicity, we denote $k+\frac{1}{2}$ by $k^+$ for all $k\in\mathbb{N}.$ 
Next, we need to alter the random oracle defined in \eqref{eq:gmu} to adapt it to this setting. Here we assume that before each function alteration from $f_k$ to $f_{k^+},$ we have enough time for at least three function queries.  As mentioned after \eqref{eq:gsmooth} and before \eqref{eq:gmu}, 
we know that $\nabla f^L_{\mu,k}(y) = E_u[\frac{f^L_{k}(y+\mu u)}{\mu}u]$ and $E[u]=0.$ Thus, we can define 
\begin{align}\label{eq:gon11}
    g_{\mu_y,k}(x,y) &= \tfrac{1}{\mu} (f^L_{k}(x,y+\mu u)-f^L_{k}(x,y)) u, \\
    g_{\mu_y,k^+}(x,y) &= \tfrac{1}{\mu} (f^L_{k}(x,y+\mu u)-f^L_{k^+}(x,y)) u, \label{eq:gon}
\end{align}
with
$E_u[g_{\mu,k^+}(x,y)]=\nabla_y f^L_{\mu,k}(x,y).$ We consider Algorithm~\ref{alg:ZOEG} replacing $G(z_k)$ and $G(\hat{z}_k)$ with 
\begin{align}
    G_k(z_k)\!=\!\left[ \hspace{-0.2cm} \begin{array}{c}
         g_{x,k}(z_k)\\ 
         -g_{\mu_y,k}(z_k)
    \end{array} \hspace{-0.2cm} \right]\hspace{-0.1cm},\; G_{k^+}(\hat{z}_k)\!=\!\left[ \hspace{-0.2cm} \begin{array}{c}
         g_{x,k}(\hat{z}_k)\\ 
         -g_{\mu_y,k^+}(\hat{z}_k)
    \end{array} \hspace{-0.2cm} \right].
\end{align}

In this case, we need an extra assumption. In particular, we need to assume that 
\begin{align}\label{ass:V}
    |f^L_k(z)-f^L_{k^+}(z)|\leq V, \qquad \forall z\in\R^d
\end{align}
    where $V\in \R_{\geq 0}$ denotes a constant. This is a standard assumption in the literature of online ZO optimisation~\cite{shames2019online}, and is a crucial assumption for controlling the variance of the random oracle.

 The next remark explains more on how the results given in Theorem~\ref{th:on} will change with the assumption of having enough time for at least 3 function queries before each cost function alteration instead of 4 function calls.

\begin{remark}\label{rem:4to3}
    We know $E_{u}[G_{k^+}(z)] = \hat{F}(z)$ as $E_u[g_{\mu,k^+}(x,y)]=\nabla_y f^L_{\mu,k}(x,y)$ and thus, in expectation we can use \eqref{eq:gon} instead of \eqref{eq:gon1} to get the same results as given in \eqref{eq:eqon2}, but with one difference, we need \eqref{ass:V} to be satisfied to be able to control the variance of $G_{k^+}(z)$. As we use two different functions $f_k$ and $f_{k^+}$ to calculate $g_{\mu,k^+},$ we can no longer use \cite{nesterov_random_2017} Thm~4.1 to bound $E_u[\|G_{k^+}(z)\|^2]$ and conclude that $E_{\hat{u}_k}[\|G_{k^+}(\hat{z}_k)\|^2] \leq L_0^2 + L_{0y}^2(m+4)^2.$ In this case, we can reformulate $g_{\mu_y,k^+}(x,y)$ as 
    \begin{align}
         g_{\mu_y,k^+}(x,y) &= \tfrac{1}{\mu} (f^L_{k}(x,y+\mu u)-f^L_k(x,y) +f^L_k(x,y) -f^L_{k^+}(x,y)) u. \label{eq:gon2}
    \end{align}
    Then similar to \cite{nesterov_random_2017} Thm~4.1, we can show that 
    $E_u{[\| g_{\mu_y,k^+}(x,y)\|^2]}\leq 2L_{0y}^2(m+4)^2 + \frac{2mV^2}{\mu^2}$ and thus 
    \begin{align}\label{eq:Gnew}
        E_{\hat{u}_k}[\|G_{k^+}(\hat{z}_k)\|^2] \leq L_0^2 + 2L_{0y}^2(m+4)^2+\frac{2mV^2}{\mu^2}
    \end{align}
     holds. Hence, we can consider \eqref{eq:Gnew}, follow the exact same steps of the proof of Theorem~\ref{th:on}, and with proper choice of hyperparameters, conclude the same results as in \eqref{eq:eqon2}.
\end{remark}

 The next remark explains more on the random oracle \eqref{eq:gon} and on the assumption of having enough time for at least 3 function queries before each cost function alteration.
 \begin{remark}\label{rem:fun_change}
     Similar to \cite{farzin2025minimisationsubmodular} Rem.~4,  if in the update step for the online setting
    \begin{align*}
    \tilde{g}_{\mu_y,k^+}(x,y) = \tfrac{1}{\mu} (f^L_{k^+}(x,y+\mu u)-f^L_{k}(x,y)) u
\end{align*}
    is used instead of \eqref{eq:gon}, then we get $\nabla_y f^L_{\mu,k^+}(x,y) = E_u[\tilde{g}_{\mu,k^+}(x,y)].$ In this case, $\nabla_y f^L_{\mu,k^+}(x)$ is not an unbiased estimator of $\nabla_y f^L_{\mu,k}(x)$ and we cannot use \cite{nesterov_random_2017} Thm.~2 directly and an extra assumption is necessary. In particular, we need to \eqref{ass:V} to be satisfied. This means that the change of the functions' values during the sampling period in each iteration is upper-bounded. With this assumption, we can get the estimate
    \begin{align}\label{eq:eqonrem}
	E_{\mathcal{U}_{N}}[\text{Dual-Gap}_{\tau,N} ]\leq O(\sqrt{N(1+\bar
    {P})}) + O(NV),
	\end{align}
    which means that there exists a non-vanishing extra error that depends on the change in the functions' values over the sampling period. The same conclusions can be made about the results if we remove the assumption of having enough time for at least 3 function queries before the cost function alteration.
\end{remark}

\section{Extra Numerical Examples and Explanations}\label{app:exm}
In this section, we will give more explanations on the examples given in Section~\ref{sec:exm} and present two more examples to illustrate the theoretical results given in Section~\ref{sec:main}. All the tests have been run on a Dell Latitude 7430 Laptop with a 12th Gen Intel Core i7 CPU.
\footnote{The Python code is publicly available at \url{https://github.com/amirali78frz/Minimax_projects.git}.}\footnote{You can find the Python package for zeroth-order solvers using random oracles at \url{https://github.com/amirali78frz/ZoSolvers.git} and \url{https://pypi.org/project/ZoSolvers/}}.

\subsection{Offline Adversarial Image Segmentation}\label{app:offex2}
In this section, we analyse the adversarial attack on semi-supervised image segmentation. Previously, in works such as \cite{bach2013learning}, the problem of offline semi-supervised image segmentation through minimisation of a submodular function has been explored and solved using different algorithms. The minimisation problem and its cost function are defined as the following.
Given $p$ points in a certain set $V$ and weights $d:V\times V\to \R_{\geq 0}$ 
we intend 
to select a set $A\subset V$, which minimises the cut cost function 
\begin{align*}
    Cost(A) \!=\! d(A,V\setminus A)
\end{align*}
where we denote $d(B,C) = \sum_{k\in B,\;j\in C} d(k,j)$ for any two sets $B$, $C$. Here we consider $50\times50$ images as set $V$ , i.e., $p=2500.$ For this case, we convert the image into a graph, where each pixel is a node, 
edges connect 4 neighbours and edge weights measure similarity, i.e.,
\begin{align*}
    d(k,j) = \exp\left(-\frac{(I_k-I_j)^2}{2\sigma_I^2}-\frac{(x_k-x_j)^2}{2\sigma_x^2}\right).
\end{align*}
Here,
$I_j$ and $x_j$ are the intensity and location of a pixel $j$, respectively, and $\sigma_x,\sigma_I\in \R_{>0}$ denote weighting parameters. For the following experiments, the parameters $\sigma_x=1$ and $\sigma_I=20$ are used. 
Above cost function is shown to be a submodular function in \citep[Sec.~6]{bach2013learning}. 
Pixels with similar intensities have large $d(k,j),$ discouraging cuts between them. 
The weight between two pixels is a decreasing function of their distance in the image and the difference of pixel intensities. 
As mentioned this problem is semi-supervised. Thus, we have seed pixels $s\in \mathcal{S},$ where $\mathcal{S}$ is the set of seeds (we reserve $S$ for the set variable of \eqref{mainp}) The segmentation is obtained by minimising the cost function regarding the labelling constraints.

Next, we extend the above problem to the adversarial semi-supervised image segmentation, which can be formulated and solved through a min-max problem. As mentioned above, the minimisation problem is constrained to the labels for 
the given seeds. In the adversarial setting, we introduce the adversary vector $y\in\R^{|\mathcal{S}|},$ where $y_s\in[0,1]$ and $\sum_{s\in \mathcal{S}}
y_s
\leq \rho.$ Then, vectors 
$y$ can be viewed as enforcement trust weights of the given seeds, 
$y_s=1$ means a fully trusted seed for correction, 
$y_s=0$ means an ignored seed and 
$\rho$ indicates the maximum number of seeds that can be trusted for correction. Thus, a lower $\rho$ means more uncertainty in the given seeds. Since the adversary maximises the objective, it selectively enforces the constraints of misclassified seeds, penalising the learner most. The learner selects the subset $A\subset V$ to minimise the overall cost, while the adversary maximises it. The resulting adversarial optimisation problem is
\begin{align*}
\min_{A \subset V}\;
\max_{\substack{y \in [0,1]^{|\mathcal{S}|}\\ \sum_{s\in {\mathcal{S}}} y_s \le \rho}}
\Bigg\{
d(A, V \setminus A)
\;+\;
\lambda \sum_{s \in {\mathcal{S}}} y_s \, \lvert \mathbf{1}_{A}(s) - \ell_s \rvert
\Bigg\},
\end{align*}
where $\ell_s \in \{0,1\}$ denotes the prescribed label of seed $s$, $\mathbf{1}_{A}(s)$ is the indicator of membership of $s$ in $A$, and $\lambda>0$ is a penalty parameter weighting the influence of the seed constraints.
We solve this minimax problem using Algorithm~\ref{alg:ZOEG}.
In our numerical experiments, we consider synthetic $50\times50$ noisy grayscale images consisting of two segments. A total of $20$ foreground seeds and $20$ background seeds are sampled uniformly at random from the corresponding regions. We choose $\lambda=5,$ $h_1=h_2=10^{-3},$ $\mu=10^{-5},$ and $t_k=t=20.$

In Figure~\ref{fig:seg1}, the results of segmentation and convergence of the Lovász extension of the objective function for three values of $\rho$ are 
shown. The results demonstrate a sharp transition with respect to the adversarial budget $\rho$. When $\rho$ exceeds a critical threshold (approximately $\rho \ge 5$ for this example), the recovered segmentation coincides with the ground truth. For $\rho$ below this threshold, no choice of algorithmic parameters yields a correct segmentation, as the adversary can selectively enforce an insufficient number of seeds, allowing the minimiser to disconnect the object. This behaviour highlights the intrinsic robustness--accuracy trade-off induced by the adversarial formulation and illustrates the role of $\rho$ as a measure of the minimum number of reliable seeds required to preserve global connectivity.
\begin{figure}[htb]
    \centering
    \includegraphics[width=0.95\linewidth]{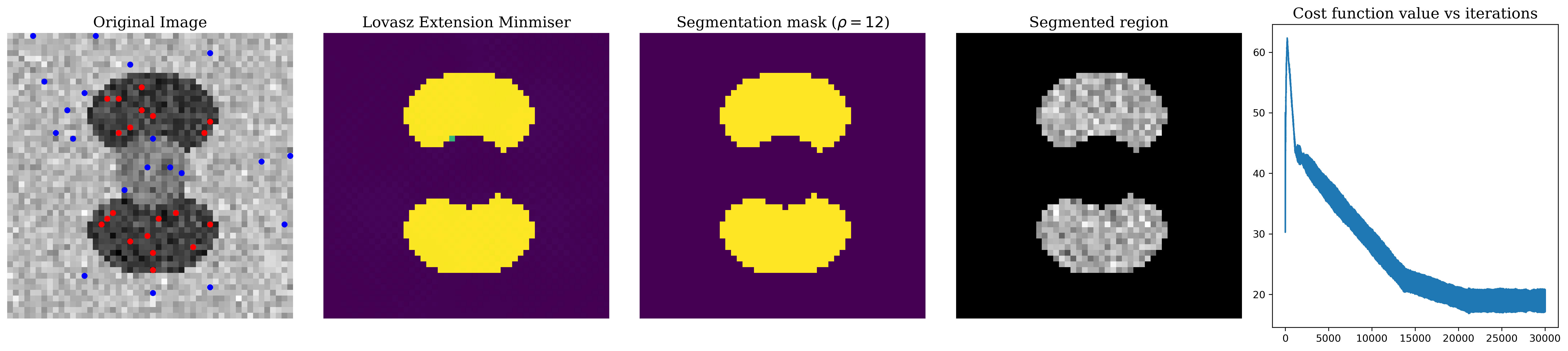}
    \includegraphics[width=0.95\linewidth]{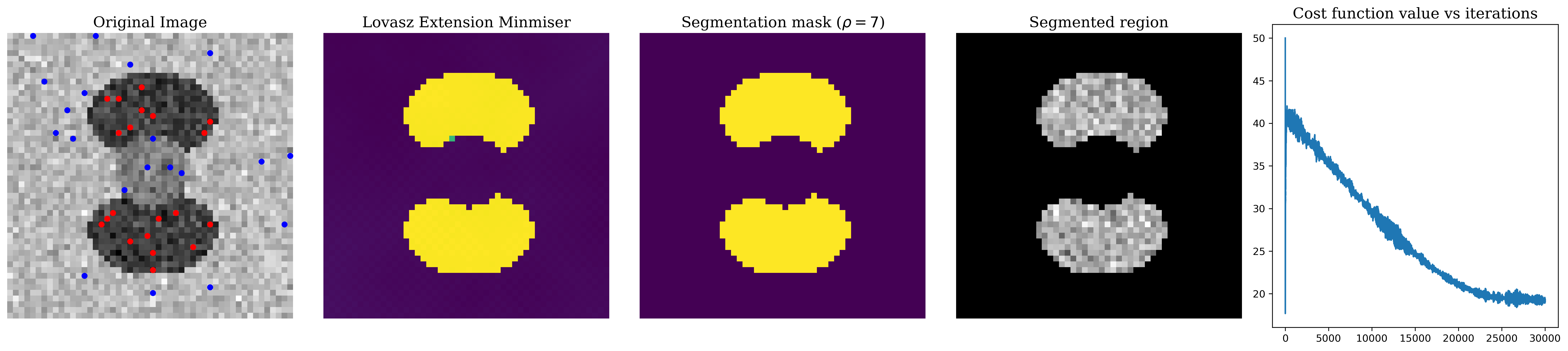}
    \includegraphics[width=0.95\linewidth]{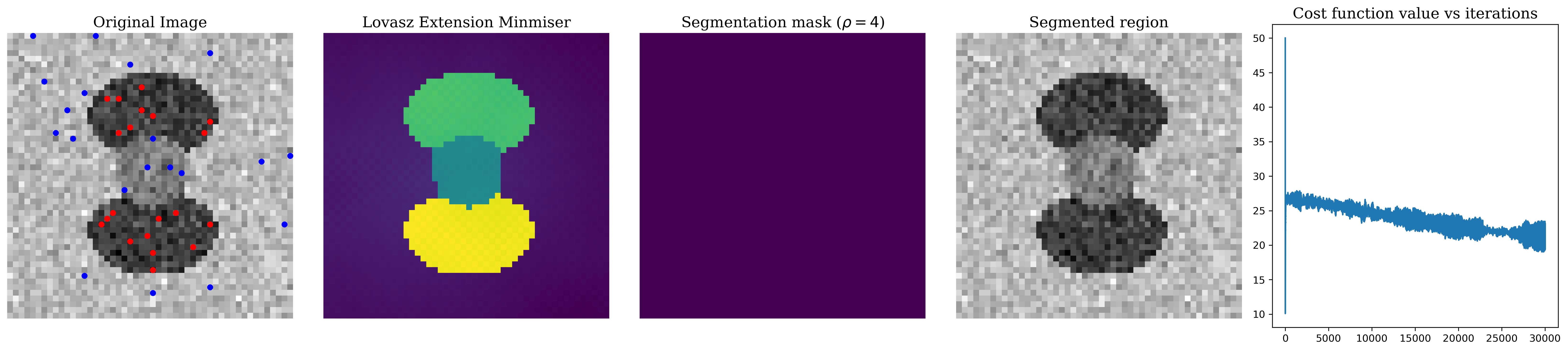}
    \caption{Offline Image Segmentation}
    \label{fig:seg1}
\end{figure}

In Figure~\ref{fig:seg1}, the surge in the cost function during the transition period before convergence is an observed phenomenon in min-max optimisation. It arises because the extragradient method alternates between the minimiser pushing the cost down and the maximiser pushing it up, causing transient oscillations before the two players reach equilibrium. Importantly, our convergence guarantee, Theorem~\ref{th:off}, is for the expected duality gap and the best generated candidate, not for monotonic decrease at every iteration. This is standard in the optimisation and ZO literature \cite{nesterov_random_2017,farzin2025minmax}. As can be seen in Figures~\ref{fig:seg11} and~\ref{fig:seg1}, despite the transient surges, the algorithm ultimately produces correct segmentations that match the ground truth.

\subsection{Online Adversarial Image Segmentation}\label{app:onex2}

In this section, we solve the online setting of the problem introduced in Section~\ref{sec:offex2}, where we have a video input where the shapes and clusters 
move and rotate. Thus, edge weights, defined in Appendix~\ref{app:offex2}, change 
constantly, leading to an online problem.
We solve the problem using Algorithm~\ref{alg:ZOEG} with parameters $\lambda=10$
$h_1=h_2 = 3\times10^{-2},$ $\mu = 10^{-3},$ $t_k=10$ and $\rho=25$. In this case, we consider a synthetic noisy 3-minute 60 frames per second (fps) video as the input, where each frame is a $50\times50$ image with $50$ seeds for each cluster. The seeds are resampled at every frame, uniformly from the corresponding noiseless regions. The segmentation using Algorithm~\ref{alg:ZOEG} and a Python implementation can be done in real-time for videos up to 60 fps. The Lovász extension value versus iterations is given in Figure~\ref{fig:fvalon2}. 
\begin{figure}[htb]
    \centering
    \includegraphics[width=0.6\linewidth]{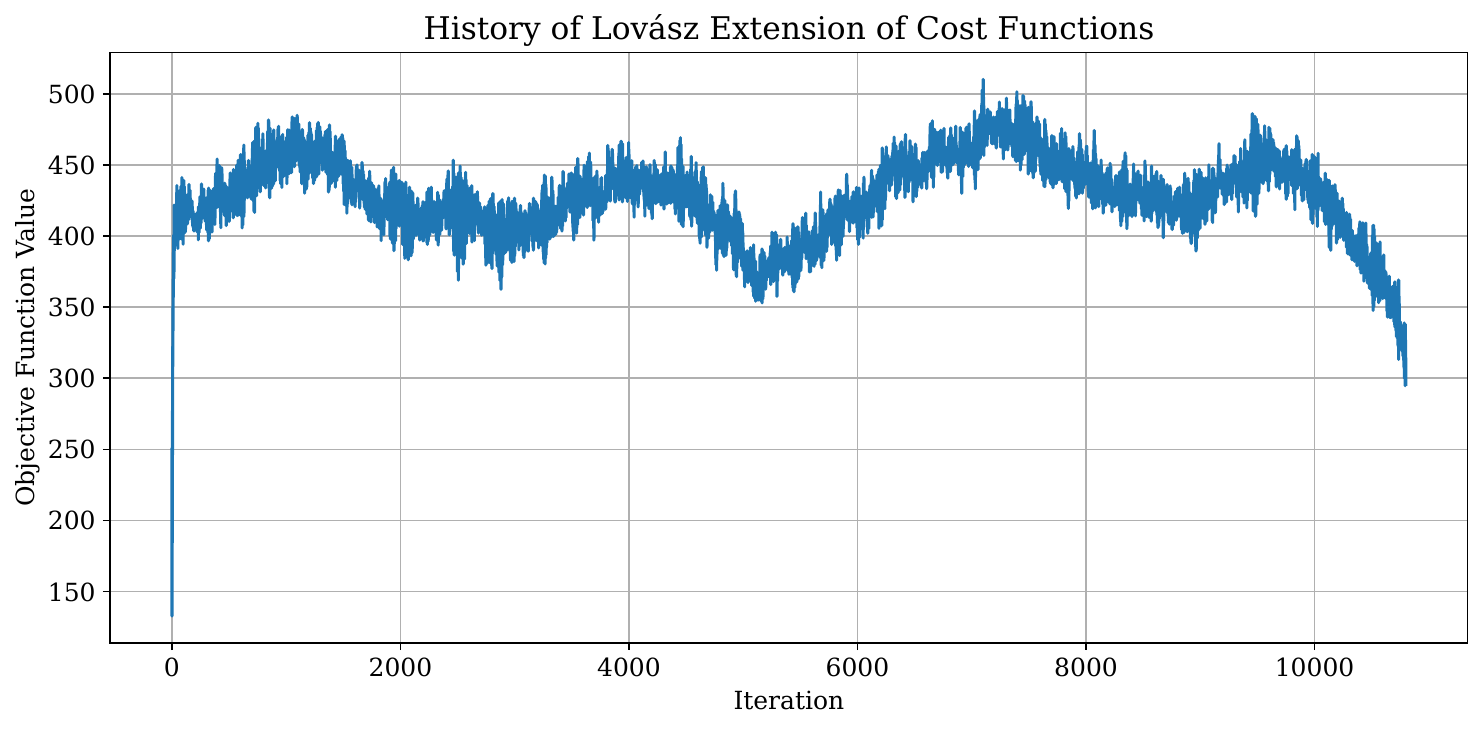}
    \caption{Lovász extension history over iterations for online image segmentation.}
    \label{fig:fvalon2}
\end{figure}

Figure~\ref{fig:fvalon2} corresponds to the online setting, where the cost function $f_k,$
 changes at every iteration (each frame of the video introduces new edge weights due to the rotating and moving object), as well as the solution. A monotonically decreasing trend is not expected unless variations vanish, which is not the case here. The algorithm's goal is to track the changing optimum, not converge to a fixed point. Evidence of near-saddle-point solutions at each frame is provided by Table~\ref{tab:method_comparison} and Figures~\ref{fig:clusteron3} and~\ref{fig:clusteron2}, which show that the segmentation consistently matches the ground truth despite the changing cost function and adversarial perturbations.
Moreover, the clusterings at times $t=0,$ $t=60s,$ $t=120s,$ and $t=180s$ are shown 
in Figure~\ref{fig:clusteron2}. 
\begin{figure}[htb]
    \centering
    \includegraphics[width=0.5\linewidth]{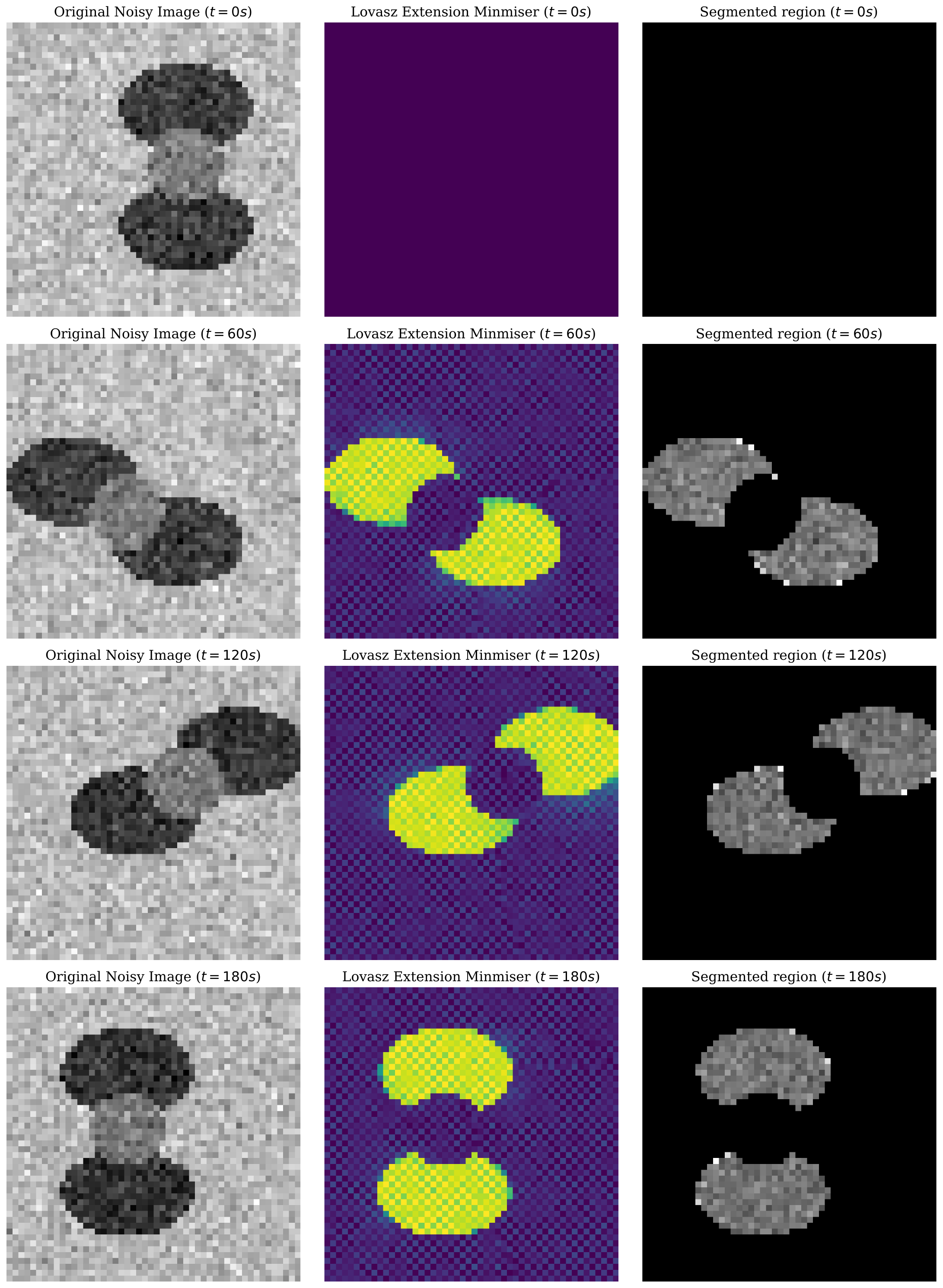}
    \caption{Online image segmentation under adversarial attack using Algorithm~
\ref{alg:ZOEG} at different times.}
    \label{fig:clusteron2}
\end{figure}
As can be seen, Algorithm~\ref{alg:ZOEG} successfully performs online image segmentation despite continuously changing edge weights and adversarial perturbations. Importantly, the algorithm operates in a fully online manner, performing a single extragradient update per incoming frame, and requires only the current state variables 
to be stored. As a result, both memory usage and computational complexity scale linearly with the number of pixels and edges, yielding predictable per-frame latency. In our Python implementation, this allows real-time operation at 60 frames per second on standard CPU hardware.

This stands in contrast to deep neural network-based segmentation approaches, which typically require offline training on large datasets, substantial memory to store millions of learned parameters, and dedicated GPU acceleration to achieve comparable inference speeds. On embedded or resource-constrained robotic platforms without internet access, such requirements significantly limit deployability. While large deep models may achieve competitive accuracy under unconstrained settings, they fail to meet real-time and memory constraints in the online adversarial semi-supervised regime considered here. These results highlight the suitability of Algorithm~\ref{alg:ZOEG} for autonomous systems requiring reliable, real-time segmentation without reliance on pretraining or external computational resources. Our comparison is not intended to replace deep learning in data-rich offline supervised settings, but to evaluate segmentation algorithms under strict online, adversarial, and resource-constrained conditions. 

As a next example, we consider 
a U-Net network for supervised segmentation. The U-Net has $1058977$ parameters and is trained on a dataset of 3000 samples of $50\times 50$ images obtained from random rotations and translations of a
base image considered at $t=0s.$ The training motions are drawn from a family of rotations and translations whose ranges cover the motions encountered in the evaluation video, so the evaluation data lie within the training distribution. The training is supervised, and ground truth (GT) clustering for each data point is given to the network. Also, we 
have trained a U-Net with $1059841$ parameters and a 3-channel input, which includes the image and seeds for each cluster to have the same input as Algorithm~\ref{alg:ZOEG}. Thus, this network is trained semi-supervised (same number of seeds per frame as for Algorithm~\ref{alg:ZOEG} but without adversary), but its setting is not adversarial. Both U-nets are trained and evaluated without adversaries.  All the methods have been tested on the same input video and performed clustering in real time. The comparison of average IoU (intersection-over-union), average precision, average recall, and average F1 score across all input frames for all methods is reported in Table~\ref{tab:method_comparison}. For Algorithm~\ref{alg:ZOEG}, the per-frame discrete segmentation is obtained by thresholding the iterate at $0.5$ (the iterates are near-binary in this application) rather than by sampling the threshold $\tau$ uniformly at random, and its metrics are averaged over all frames after discarding the first two seconds (120 frames) of the video as an initial transient; the U-Net metrics are averaged over all frames. The results for Algorithm~\ref{alg:ZOEG} are obtained by averaging over 10 independent runs. The variation across runs is on the order of $10^{-4}$ relative to the reported values, meaning a good consistency across the runs. The segmented region using the supervised and semi-supervised U-Net at different times is shown in Figure~\ref{fig:unet_seg}.
It can be seen from Table~\ref{tab:method_comparison} that without any need to datasets and pre-training, Algorithm~\ref{alg:ZOEG} outperforms the trained supervised and semi-supervised U-Nets. It may be possible to improve the performance of both supervised and semi-supervised networks by tuning their architecture and hyperparameters, but still the performance of Algorithm~\ref{alg:ZOEG} will be competitive. This independency of Algorithm~\ref{alg:ZOEG} from pre-training and its ability to work in real-time in unseen environments while needing low memory, makes it suitable for embedded systems. We also note that the reported performance of Algorithm~\ref{alg:ZOEG} in Table~\ref{tab:method_comparison} depends on the choice of $\rho$. Larger values of $\rho$ (corresponding to more reliable seeds) lead to higher performance metrics, whereas smaller values of $\rho$ (indicating less trust in the given seeds) result in lower reported values. The parameter $\rho$ does not affect the U-Net baselines, as they are trained and evaluated without adversarial perturbations. One limitation of the proposed method is that the accuracy of segmentation and tuning of the parameters is sensitive to the rate of change in the cost functions over time, as it is expected and explained in Remarks~\ref{rem:fun_change} and~\ref{rem:4to3}.
\begin{figure}[htb]
    \centering
    \includegraphics[width=0.6\linewidth]{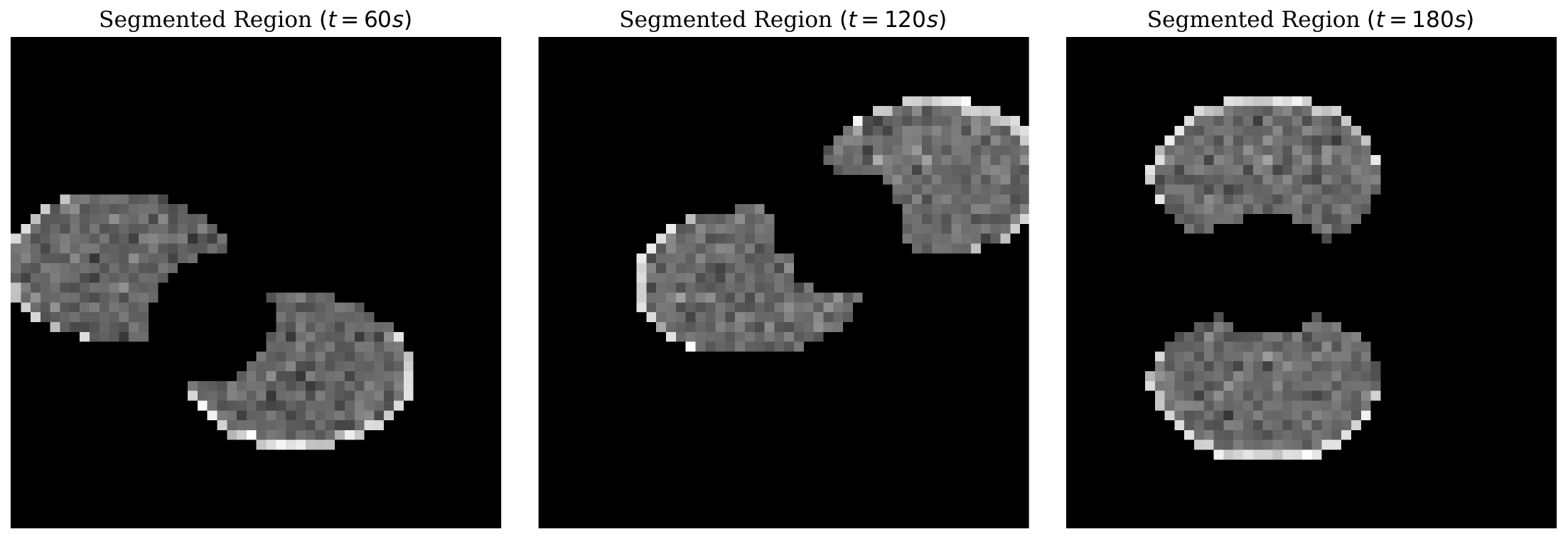}
    
    \includegraphics[width=0.6\linewidth]{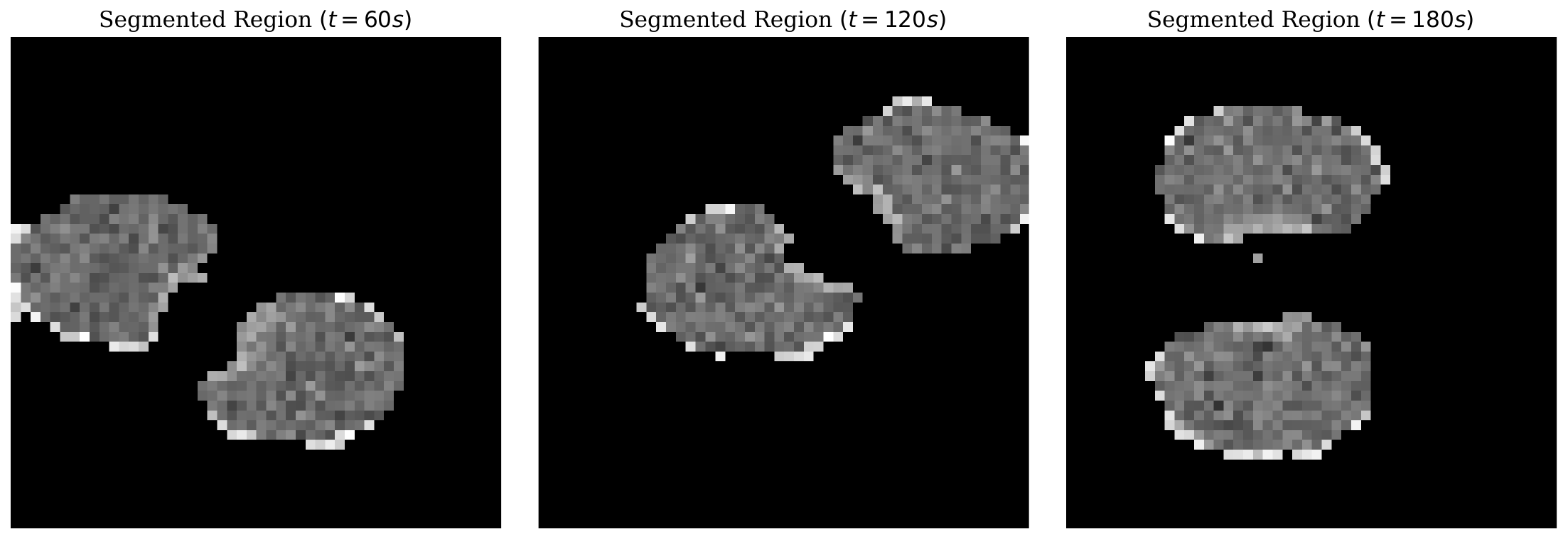}
    \caption{Online image segmentation at different times. Top row is predicted by the supervised U-Net and bottom row is predicted by the semi-supervised U-Net.}
    \label{fig:unet_seg}
\end{figure}

\subsection{Offline Adversarial Attack on Semi-Supervised Clustering}\label{sec:offex1}
In this section, we analyse the adversarial attack on semi-supervised clustering. Previously, in works such as \cite{bach2013learning,farzin2025minimisationsubmodular}, the problem of offline and online semi-supervised clustering through minimisation of a submodular function has been explored and solved using different algorithms. The minimisation problem and its cost function are defined as the following.
Given $p\in \N$ data points in a certain set $V,$ we intend 
to select a set $A\subset V$, which minimises the
cost function
\begin{align*}
    Cost(A) \!=\! I(f_A,f_{V\setminus A}) \!-\!\! \sum_{k\in A}\log\; \eta_k \!-\!\!\! \sum_{k\in V\setminus A} \log (1-\eta_k),
\end{align*}
where, 
$f_A$ and $f_{V\setminus A}$ are two Gaussian processes with zero mean and covariance matrices $K_{AA}$ and $K_{{V\setminus A} {V\setminus A}},$
\begin{align*}
 I(f_A,f_{V\setminus A})&=\tfrac{1}{2}\big(\log\!\det(K_{AA})+\log\!\det(K_{{V\setminus A} {V\setminus A}})  -\log\!\det(K_{VV})\big)
\end{align*}
is the mutual information between the Gaussian processes, and $\eta_k$ is the probability of each element to be in set $A.$  We consider each data point $x_i\in\mathbb{R}^2$ and sample each data point from the “two-moons” dataset \cite{bach2013learning}. We consider 50 points and 10 randomly chosen labelled points. We impose $\eta_k\in\{0,1\}$ for the labelled points and $\eta_k=\tfrac{1}{2}$ for the rest. We consider the Gaussian radial basis function kernel $k(x,y) = \exp(\frac{-\|x-y\|^2}{2\sigma^2})$ with $\sigma^2=0.05$ to obtain covariance matrices.

Here, we extend the above problem to the adversarial attack on semi-supervised clustering, which can be formulated and solved through a min-max problem. 
In this setting, an adversary perturbs the prior probability of the labelled data through an attack vector $y\in[0,1]^{|M|}$, where $M$ denotes the set of labelled indices and $y_i=1$ indicates that the label of the $i$-th labelled point is completely flipped. Vector $y$ can be viewed as the uncertainty of the labelled data. The learner, on the other hand, selects the subset $A\subset V$ to minimise the overall cost, while the adversary maximises it. The resulting adversarial optimisation problem is
\begin{align}\label{eq:offp}
    \min_{A\subset V}\; \max_{y\in\mathcal{Y}_\rho} \; f(A,y),
\end{align}
where $\rho\geq0,$ $\mathcal{Y}_\rho = \{\, y\in[0,1]^{|M|} : \sum_i y_i \le \rho \,\}$ constrains the attack budget, and the cost function $f(A,y)$ is defined as
\begin{align}\label{eq:fAy}
    &f(A,y) =I(f_A,f_{V\setminus A})  -\!\!\!\!\!\!\! \sum_{k\in (V\setminus A)\cup U} \!\!\!\!\log (1\!-\!\eta_k) -\!\!\! \sum_{k\in A\cup U}\log\; \eta_k- \sum_{k\in M}\Big[(1-y_k)\log p_k^n + y_k\log p_k^f\Big],
\end{align}
with $U = V\setminus M$ denoting the unlabelled indices, and
\begin{align*}
    p_k^n = \eta_k^{\mathbb{I}[k\in A]}(1-\eta_k)^{\mathbb{I}[k\notin A]},\qquad
    p_k^f = (1-\eta_k)^{\mathbb{I}[k\in A]}\eta_k^{\mathbb{I}[k\notin A]}.
\end{align*}
 The first term in~\eqref{eq:fAy} measures the mutual information between the Gaussian processes associated with the two clusters, while the remaining terms penalise deviations from the prior probabilities $\eta_k$ and the adversarial label flips encoded by $y$. Similar to the explanation given in \cite{bach2013learning}, it can be seen that $f(A,y)$ is submodular with respect to $A.$ Moreover, $f(A,y)$ is affine and thus concave with respect to $y.$ We use Algorithm~\ref{alg:ZOEG} to solve problem~\eqref{eq:offp}. First, we test the trivial cases when $\rho=\infty$ and $\rho=0.$ To have more uncertainty, we can impose $\eta_k\in\{0.1,0.9\}$ for the labelled data and $\eta_k=\tfrac{1}{2}$ for the rest. We test the algorithm with $N=1500,$ $h_1=h_2 = 10^{-3},$ and $\mu = 10^{-3}.$ The results can be seen in Figure~\ref{fig:trivial}. 
 \begin{figure}[htb]
    \centering
    \includegraphics[width=0.5\linewidth]{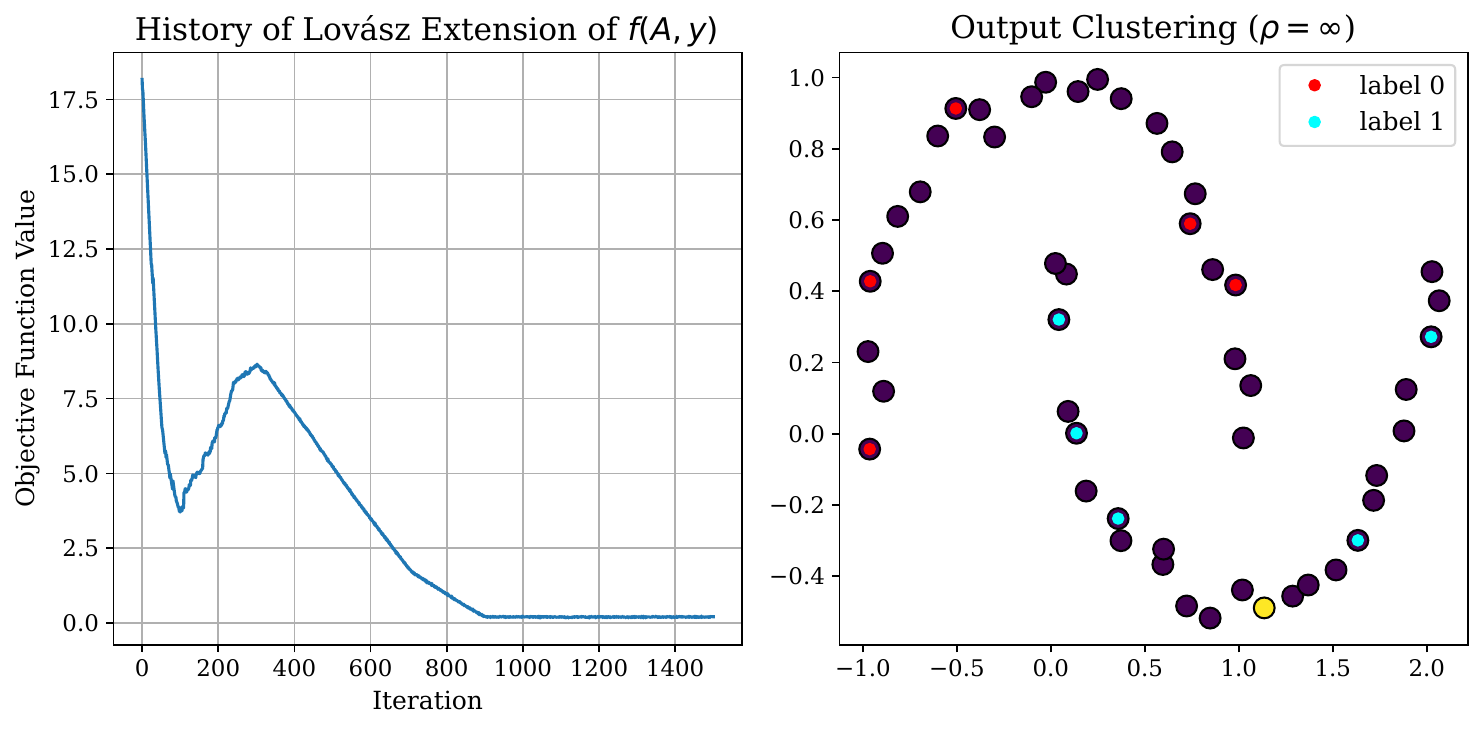}
    \includegraphics[width=0.5\linewidth]{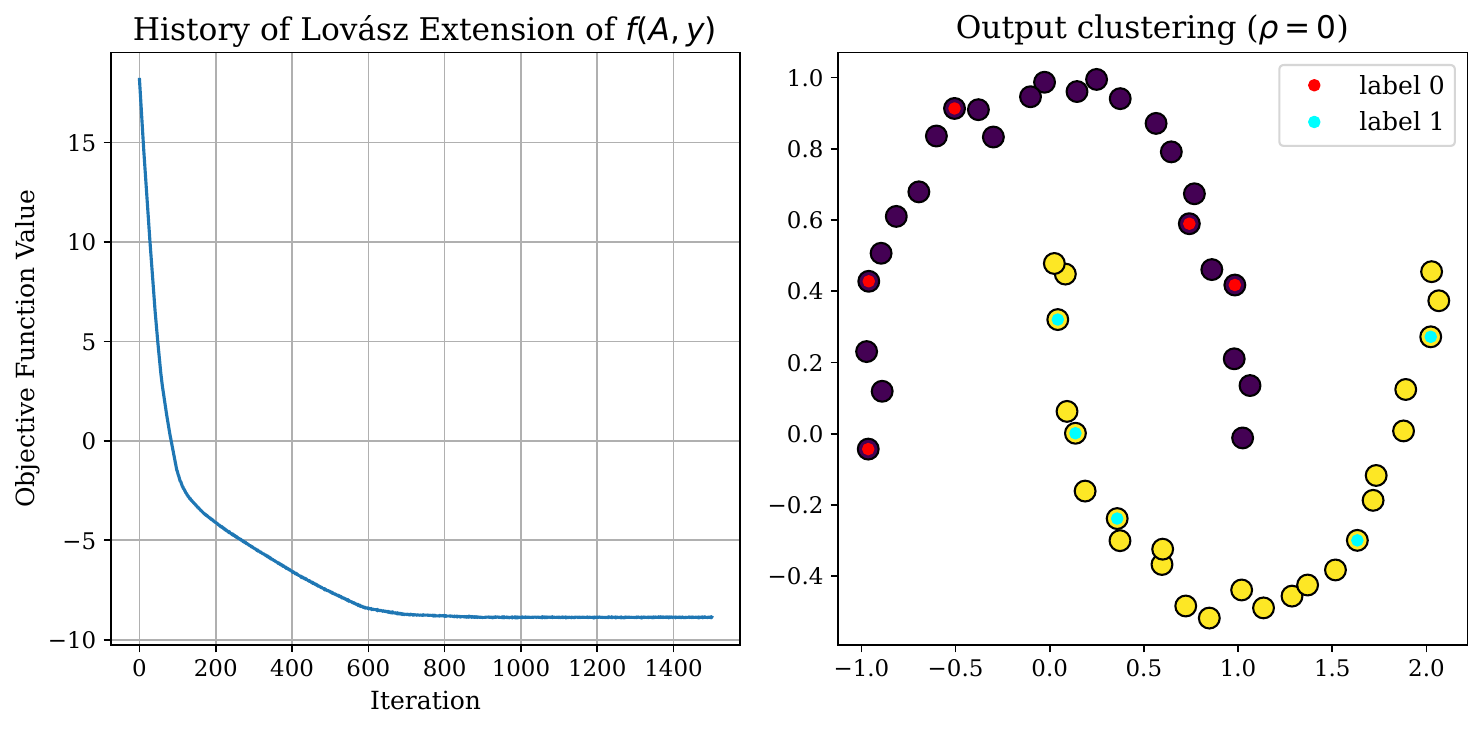}
    \caption{Lovász extension history over the sequence generated by Algorithm~\ref{alg:ZOEG} and the offline clustering for $\rho=\infty$ and $\rho=0$.}
    \label{fig:trivial}
\end{figure}
 As expected, when the attacker has an infinite budget, the clustering is completely corrupted and when the attacker has zero budget, the clustering is fully completed. Next, we solve 
 the problem with the limited budget for the attacker and set   $\rho=2.$ The result is shown in Figure~\ref{fig:r2}. 
 \begin{figure}[htb]
    \centering
    \includegraphics[width=0.65\linewidth]{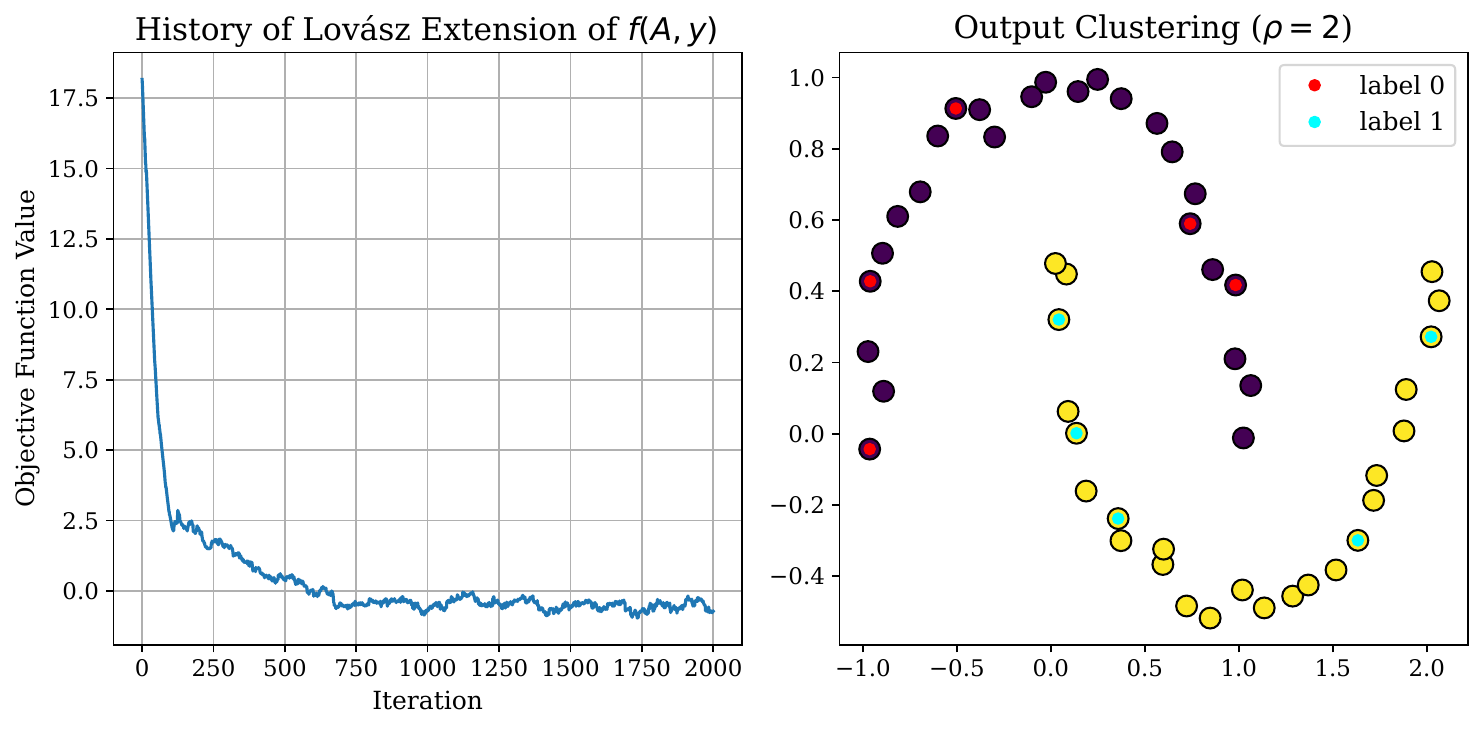}
    \caption{Lovász extension history over the sequence generated by Algorithm~\ref{alg:ZOEG} and the offline clustering for $\rho=2$.}
    \label{fig:r2}
\end{figure}
 It can be seen that even when the attacker has enough budget to flip two 
 of the labels completely, the clustering is completed successfully, and the semi-supervised clustering has been robust to the uncertainties. Next, we investigate the relation between the attacker budget and the clustering accuracy. We set $N=2500,$ $h_1=h_2 = 10^{-4},$ $\mu = 10^{-6},$ and for $\rho\in[2,5]$ we increase the budget, run the algorithm and measure the final clustering accuracy. The corresponding results can be seen in Figure~\ref{fig:ab}.
\begin{figure}[htb]
    \centering
    \includegraphics[width=0.65\linewidth]{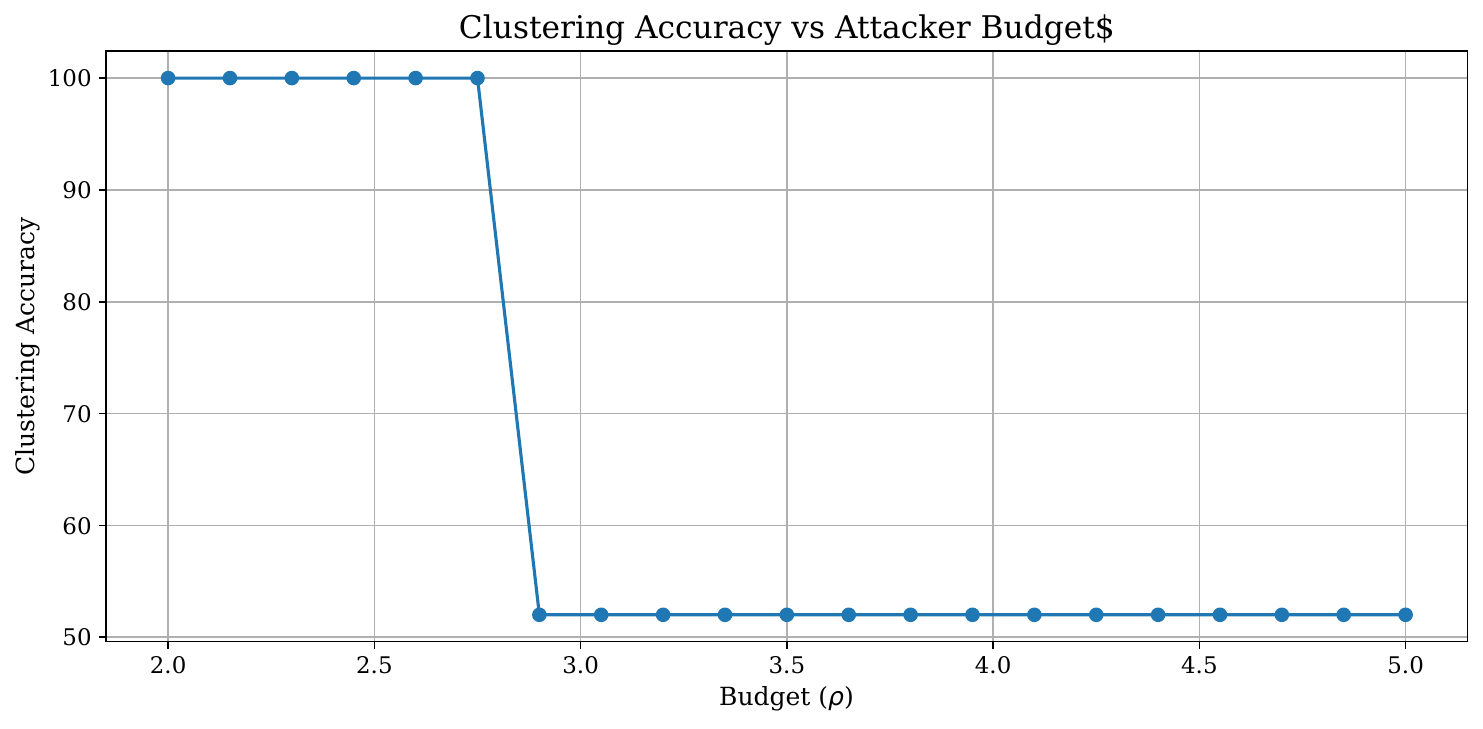}
    \caption{Offline clustering accuracy versus the attacker's budget.}
    \label{fig:ab}
\end{figure}
\subsection{Online Adversarial Attack on Semi-Supervised Clustering}\label{sec:onex1}
In this section, we solve the online setting of the problem introduced in Section~\ref{sec:offex1}, where each of the true clusters is moving with a different trajectory in each iteration. Thus, $K_{VV}$, defined in Section~\ref{sec:offex1}, changes constantly, leading to an online problem.
We solve the problem using Algorithm~\ref{alg:ZOEG} and we set $N=6000,$ $h_1=h_2 = 10^{-4},$ $\mu = 10^{-6}$ and $\rho=2$. In this case, we consider 40 points and 8 randomly chosen labelled points. The Lovász extension value versus iterations is given in Figure~\ref{fig:fvalon}. 
\begin{figure}[htb]
    \centering
    \includegraphics[width=0.6\linewidth]{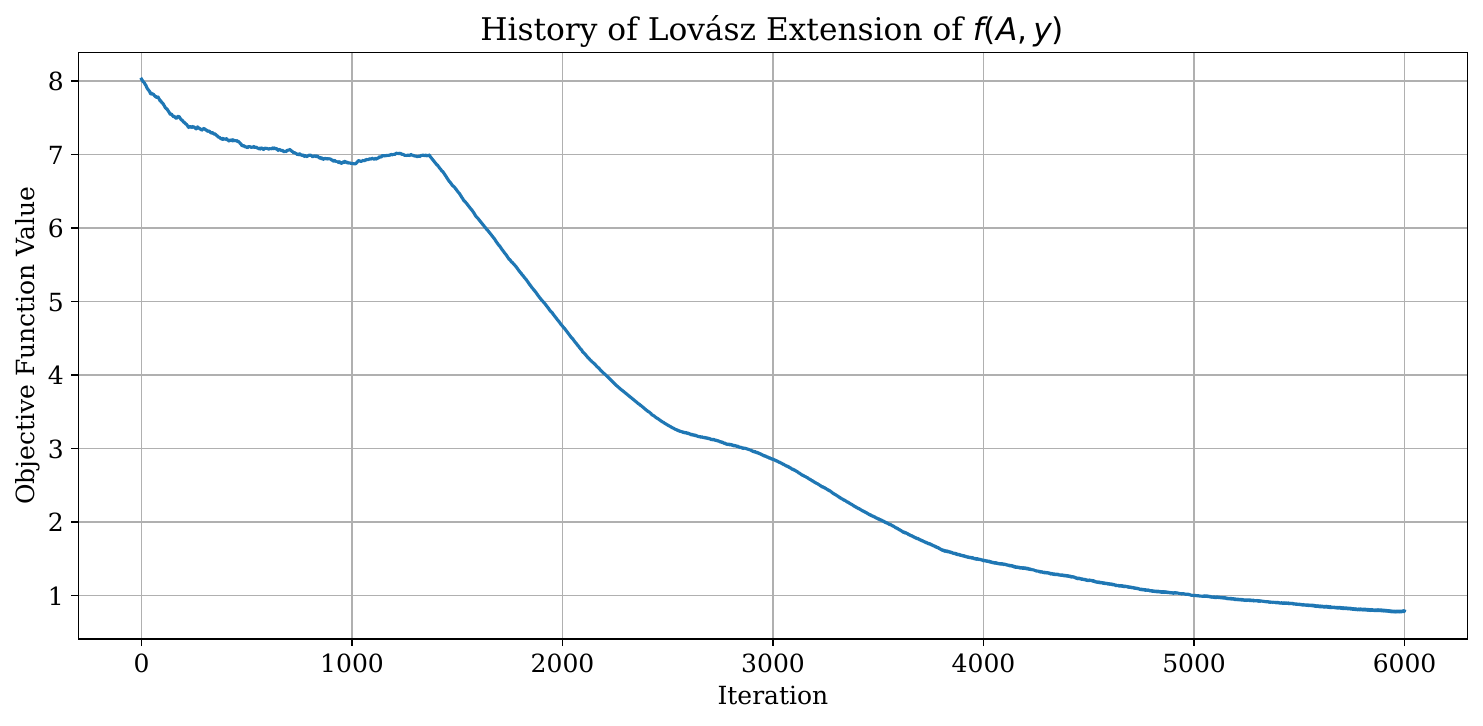}
    \caption{Lovász extension history over iterations for online clustering.}
    \label{fig:fvalon}
\end{figure}
Moreover, the final clustering is given in Figure~\ref{fig:clusteron}. 
\begin{figure}[htb]
    \centering
    \includegraphics[width=0.6\linewidth]{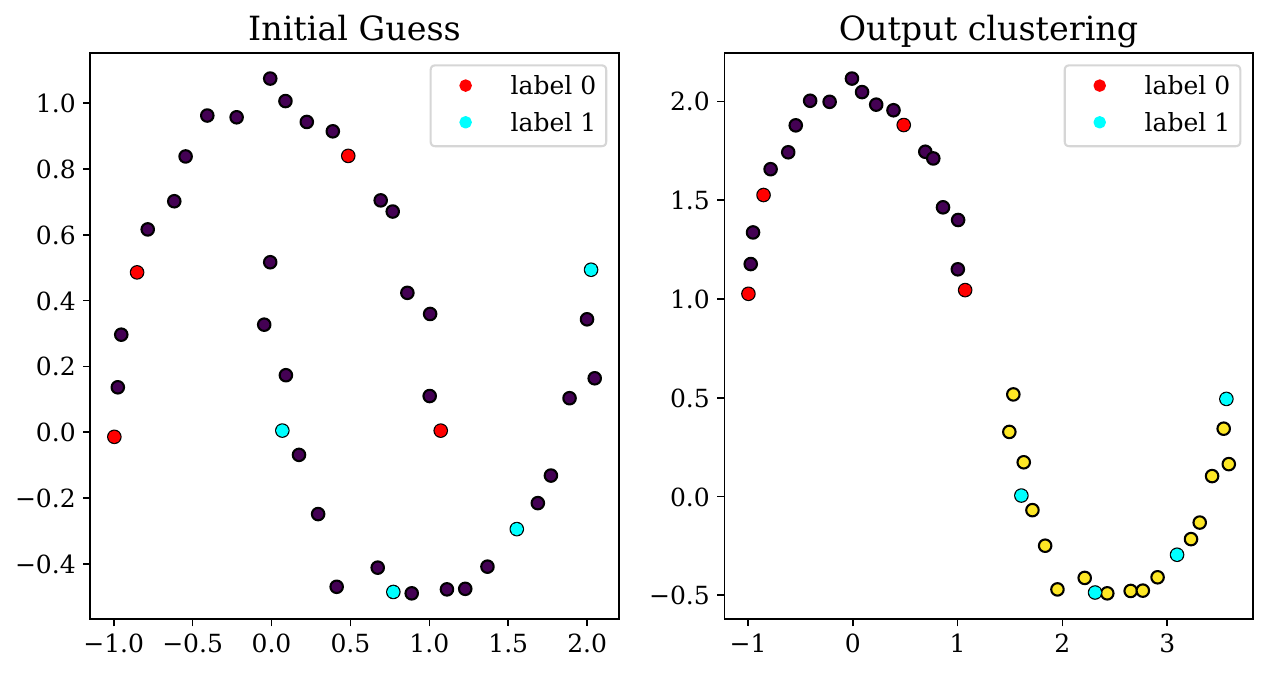}
    \caption{Online clustering of the Two Moon dataset under adversarial attack using Algorithm~
\ref{alg:ZOEG}.}
    \label{fig:clusteron}
\end{figure}
As can be seen, Algorithm~\ref{alg:ZOEG} has successfully completed the online clustering task under adversarial attacks. Moreover, the positions and predicted labels of data points over iterations can be found in Figure~\ref{fig:pos}. 
\begin{figure}[htb!]
    \centering
    \includegraphics[width=0.5\linewidth]{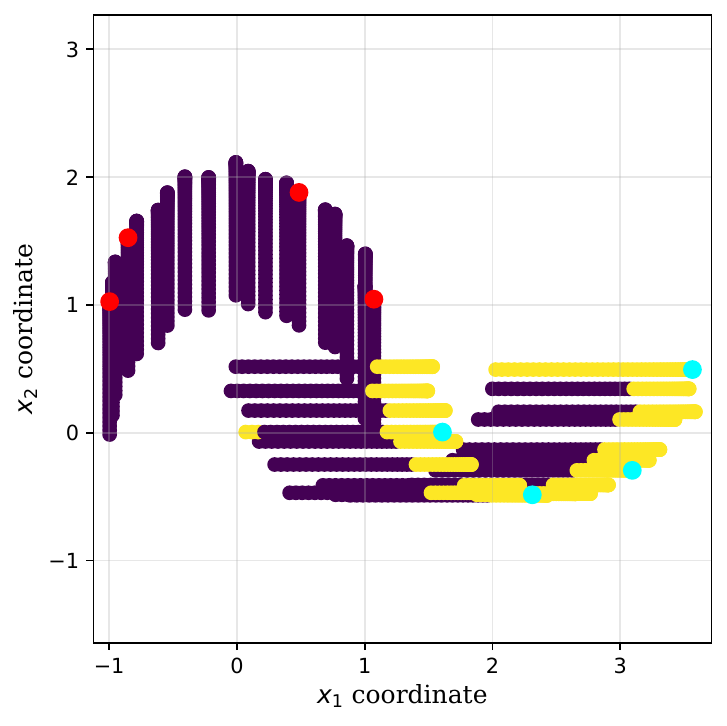}
    \caption{Positions and labels over iterations - online robust clustering.}
    \label{fig:pos}
\end{figure}
Similarly to the offline case, one can see that with more budget, the attacker can disrupt the clustering completely.

\end{document}